\definecolor{Mycolor2}{HTML}{0f4c5c}
\definecolor{Mycolor1}{HTML}{7b2cbf}
\definecolor{Mycolor3}{HTML}{ff6d00}
\newcommand\boro[1]{{\textcolor{black}{#1}}}
\newcommand{\hyp}[5]{\,\mbox{}_{#1}F_{#2}\!\left(
\genfrac{}{}{0pt}{}{#3}{#4};#5\right)}
\newcommand{\Ohyp}[5]{\,\mbox{}_{#1}{\bm{F}}_{#2}\!\left(
\genfrac{}{}{0pt}{}{#3}{#4};#5\right)}
\def\cprime{$'$}
\newtheorem{thm}{Theorem}[section]
\newtheorem{cor}[thm]{Corollary}
\newtheorem{rem}[thm]{Remark}
\newtheorem{lem}[thm]{Lemma}
\def\eqnarray{\stepcounter{equation}\let\@currentlabel=\theequation
\global\@eqnswtrue
\tabskip\@centering\let\\=\@eqncr
$$\halign to \displaywidth\bgroup\hfil\global\@eqcnt\z@
$\displaystyle\tabskip\z@{##}$&\global\@eqcnt\@ne
\hfil$\displaystyle{{}##{}}$\hfil
&\global\@eqcnt\tw@ $\displaystyle{##}$\hfil
\tabskip\@centering&\llap{##}\tabskip\z@\cr}
\def\endeqnarray{\@@eqncr\egroup
\global\advance\c@equation\m@ne$$\global\@ignoretrue}
\newcommand{\N}{{\mathbb N}}
\newcommand{\C}{{\mathbb C}}
\newcommand{\dd}{{\mathrm d}}
\let\svus_
\def\lowerit#1{\ThisStyle{\raisebox{-2\LMpt}{$\SavedStyle#1$}}\egroup}
\begin{document}

\renewcommand{\PaperNumber}{***}

\FirstPageHeading

\ShortArticleName{Multi-integral representations for Jacobi functions}

\ArticleName{Multi-integral representations for\\ Jacobi functions of the 
first and second kind}

\Author{Howard S. Cohl
$\,^{\dag}\orcidB{}$ and 
Roberto S. Costas-Santos$\,^{\S}\orcidA{}$
}

\AuthorNameForHeading{H.~S.~Cohl and R.~S.~Costas-Santos}
\Address{$^\dag$ Applied and Computational 
Mathematics Division, National Institute of Standards 
and Tech\-no\-lo\-gy, Mission Viejo, CA 92694, USA
\URLaddressD{
\href{http://www.nist.gov/itl/math/msg/howard-s-cohl.cfm}
{http://www.nist.gov/itl/math/msg/howard-s-cohl.cfm}
}
} 
\EmailD{howard.cohl@nist.gov} 

\Address{$^\S$ Department of Quantitative Methods, Universidad Loyola Andaluc\'ia, Sevilla, Spain
} 
\URLaddressD{
\href{http://www.rscosan.com}
{http://www.rscosan.com}
}
\EmailD{rscosa@gmail.com} 


\ArticleDates{Received \today~in final form ????; Published online ????}

\Abstract{{One may consider the generalization of Jacobi polynomials 
and the Jacobi function of the second kind to a general function where the 
index is allowed to be a complex number instead of a non-negative integer. 
These functions are referred to as Jacobi functions. In a similar fashion 
as associated Legendre functions, these break into two categories, functions 
which are analytically continued from the real line segment $(-1,1)$ and 
those continued from the real ray $(1,\infty)$.
Using properties of Gauss hypergeometric functions,
we derive multi-derivative and multi-integral representations for
the Jacobi functions of the first and second kind.}
}
\Keywords{
\boro{Jacobi functions;
Jacobi polynomials; Integral representations;
Rodrigues-type relations; generalized hypergeometric functions}}

\Classification{33C45;
44A20; 42C05; 33C05; 
}

\vspace{-0.5cm}
\section{{Preliminaries}}
{
Throughout this paper we adopt the following set notations:
$\mathbb N_0:=\{0\}\cup\mathbb N=\{0, 1, 2, 3, \ldots\}$, and
we use the set $\mathbb C$ which represents the complex
numbers.
The Pochhammer symbol for $a\in\mathbb C$,
$n\in\mathbb N_0$ 
is given by \cite[
\href{http://dlmf.nist.gov/5.2.E4}{(5.2.4)}, \href{http://dlmf.nist.gov/5.2.E5}{(5.2.5)}]{NIST:DLMF}
$
(a)_n:=(a)(a+1)\cdots(a+n-1).
$
The gamma function \cite[\href{http://dlmf.nist.gov/5}{Chapter 5}]{NIST:DLMF} is related to 
the Pochhammer symbol, namely
for $a\in\mathbb C\setminus-{\mathbb N}_0$, one
 has
\begin{equation}
(a)_n=\frac{\Gamma(a+n)}{\Gamma(a)},
\label{Pochdef}
\end{equation}
which allows one to extend the definition to non-positive 
integer values of $n$.
Some other properties of Pochhammer symbols which we will use are
($n,k\in\N_0$, $n\ge k$)
\begin{eqnarray}
&&\hspace{-10cm}\Gamma(a-n)=\frac{(-1)^n\Gamma(a)}{(1-a)_n}\label{Poch1}\\
&&\hspace{-10cm}(-n)_k=\frac{(-1)^kn!}{(n-k)!}.\label{Poch2}
\end{eqnarray}
One also has the following
expression for the generalized 
binomial coefficient 
for $z\in\C$, $n\in\N_0$
\cite[\href{http://dlmf.nist.gov/1.2.E6}{(1.2.6)}]{NIST:DLMF}
\begin{equation}
\label{binomz}
\binom{z}{n}=\frac{(-1)^n (-z)_n}{n!}.
\end{equation}
We will also use the common notational 
product convention,
$a_l\in\C$, $l\in\N$, $r\in\N_0$, e.g.,
\begin{eqnarray}
&&\hspace{-7.7cm}(a_1,\ldots,a_r)_k:=(a_1)_k(a_2)_k\cdots(a_r)_k,\\[0.1cm]
&&\hspace{-7.7cm}\Gamma(a_1,\ldots,a_r):=\Gamma(a_1)\cdots\Gamma(a_r).
\end{eqnarray}
}

{
The generalized hypergeometric
function \cite[\href{http://dlmf.nist.gov/16}{Chapter 16}]{NIST:DLMF} is defined by the 
infinite series \cite[\href{http://dlmf.nist.gov/16.2.E1}{(16.2.1)}]{NIST:DLMF}
\begin{equation} \label{genhyp}
\hyp{r}{s}{a_1,\ldots,a_r}
{b_1,\ldots,b_s}{z}
:=
\sum_{k=0}^\infty
\frac{(a_1,\ldots,a_r)_k}
{(b_1,\ldots,b_s)_k}
\frac{z^k}{k!},
\end{equation}
where $b_j\not \in -\mathbb N_0$, for 
$j\in\{1, \dots, s\}$;
and elsewhere by analytic continuation.
Further define Olver's (scaled or regularized) 
generalized hypergeometric series
\begin{equation}
\Ohyp{r}{s}{a_1,\ldots,a_r}{b_1,\ldots,b_s}{z}:=
\frac{1}{\Gamma(b_1,\ldots,b_s)}\hyp{r}{s}{a_1,\ldots,a_r}{b_1,\ldots,b_s}{z}=
\sum_{k=0}^\infty \frac{(a_1,\ldots,a_r)_k}{\Gamma(b_1\!+\!k,\ldots,b_s\!+\!k)}
\frac{z^k}{k!},
\end{equation}
which is entire for all $a_l,b_j\in\C$, $l\in\{1,\ldots,r\}$, $j\in\{1,\ldots,s\}$.
Both the generalized and Olver's generalized hypergeometric series, if 
nonterminating, are entire if $r\le s$, convergent for $|z|<1$ if $r=s+1$ 
and divergent if $r\ge s+1$.
}
\medskip

The Gauss hypergeometric function ${}_2F_1$, first studied by Gauss, has many
useful and interesting properties. For instance it
satisfies several useful derivative relations
which we will rely upon. These are given
by 
cf.~\cite[\href{http://dlmf.nist.gov/15.5.E2}{(15.5.2)}, \href{http://dlmf.nist.gov/15.5.E4}{(15.5.4)}, \href{http://dlmf.nist.gov/15.5.E6}{(15.5.6)}, \href{http://dlmf.nist.gov/15.5.E9}{(15.5.9)}]{NIST:DLMF}
\begin{eqnarray}
&&\hspace{-0.2cm}\frac{\dd^n}{\dd w^n}\Ohyp21{a,b}{c}{w}
=(a)_n(b)_n\Ohyp21{a\!+\!n,b\!+\!n}{c\!+\!n}{w},
\label{der2}
\\[0.1cm]
&&\hspace{-0.2cm}\frac{\dd^n}{\dd w^n}w^{c-1}\Ohyp21{a,b}{c}{w}
=
w^{c-n-1}\Ohyp21{a,b}{c\!-\!n}w,
\label{der4}
\\[0.1cm]
&&\hspace{-0.2cm}\frac{\dd^n}{\dd w^n}(1-w)^{a+b-c}\Ohyp21{a,b}{c}{w}
=
(c-a)_n(c-b)_n
(1-w)^{a+b-c-n}\Ohyp21{a,b}{c\!+\!n}w,
\label{der6}
\\[0.1cm]
&&\hspace{-0.2cm}\frac{\dd^n}{\dd w^n}w^{c-1}(1-w)^{a+b-c}
\Ohyp21{a,b}{c}{w}
=w^{c-n-1}(1-w)^{a+b-c-n}\Ohyp21{a\!-\!n,b\!-\!n}{c\!-\!n}{w}.
\label{der9}
\end{eqnarray}
The Jacobi polynomial and many of the other
special functions which we will study in
this paper are given in terms of a terminating
Gauss hypergeometric function.
In particular, one of the most important 
classical orthogonal polynomials, the Jacobi polynomial, is defined 
as \cite[\href{http://dlmf.nist.gov/18.5.E7}{(18.5.7)}]{NIST:DLMF}
\begin{equation}
\label{Jacobipolydef}
P_n^{(\alpha,\beta)}(x):=
\frac{(\alpha+1)_n}{n!}
\hyp21{-n,n+\alpha+\beta+1}{\alpha+1}{\frac{1\!-\!x}{2}}.
\end{equation}

\section{{Definitions and properties of the Jacobi functions}}
\label{defJac}

Jacobi functions are
complex solutions $w=w(z)=w_\gamma^{(\alpha,\beta)}(z)$ 
to the Jacobi differential equation
\cite[\href{http://dlmf.nist.gov/18.8.T1}{Table 18.8.1}]{NIST:DLMF}
\begin{equation}
(1-z^2)\frac{\dd^2w}{\dd z^2}+\left(\beta-\alpha-z(\alpha+\beta+2)\right)
\frac{\dd w}{\dd z}
+\gamma(\alpha+\beta+\gamma+1)w=0,
\label{Jacde}
\end{equation}
which is a second order linear homogeneous differential equation.
Jacobi functions of the first and second kind are solutions to the Jacobi differential equation \eqref{Jacde} which are regular as $|z|\to1$ and $|z|\to\infty$ respectively. 
There are several important references for
Jacobi functions such as 
\cite[Section 10.8]{ErdelyiHTFII},
and 
\cite{Durand78,Durand79,FlenstedJensenKoornwinder73,Koornwinder84,mr2149265,Szego,WimpMcCabeConnor97}.


\subsection{The Jacobi function of the first kind}
The Jacobi function of the first kind
is a generalization of the Jacobi polynomial where the degree is no longer 
restricted to be an integer. In the following section we provide some important properties 
for the Jacobi function of the first kind.
\subsubsection{Single Gauss hypergeometric representations}
\boro{In the following result we present the four 
single Gauss hypergeometric function representations 
of the Jacobi function of the first kind.}

\begin{thm}
\label{Firstthm}
Let
$\alpha,\beta,\gamma\in\C$ such that
$\alpha+\gamma\not\in-\N$. Then the Jacobi function
of the first kind 
$P_\gamma^{(\alpha,\beta)}:\C\setminus(-\infty,-1]\to\C$ 
can be defined by
\begin{eqnarray}
&&\hspace{-0.0cm}P_\gamma^{(\alpha,\beta)}(z)=
\frac{\Gamma(\alpha+\gamma+1)}{
\Gamma(\gamma+1)}
\Ohyp21{-\gamma,\alpha+\beta+\gamma+1}
{\alpha+1}{\frac{1-z}{2}}
\label{Jac1}\\
&&\hspace{1.6cm}=\frac{\Gamma(\alpha+\gamma+1)}{
\Gamma(\gamma+1)}
\left(\frac{2}{z+1}\right)^\beta\Ohyp21
{-\beta-\gamma,\alpha+\gamma+1}{\alpha+1}{\frac{1-z}{2}}\label{Jac2}\\
&&\hspace{1.6cm}=\frac{\Gamma(\alpha+\gamma+1)}
{
\Gamma(\gamma+1)}
\left(\frac{z+1}{2}\right)^\gamma\Ohyp21{-\gamma,-\beta-\gamma}
{\alpha+1}{\frac{z-1}{z+1}}\label{Jac3}\\
&&\hspace{1.6cm}=\frac{\Gamma(\alpha+\gamma+1)}
{
\Gamma(\gamma+1)}
\left(\frac{2}{z+1}\right)^{\alpha+\beta+\gamma+1}\Ohyp21
{\alpha+\gamma+1,\alpha+\beta+\gamma+1}{\alpha+1}{\frac{z-1}{z+1}}.\label{Jac4}
\end{eqnarray}
\label{bigPthm}
\end{thm}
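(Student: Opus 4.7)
The plan is to take \eqref{Jac1} as the defining formula (it is the natural generalization of the Jacobi polynomial representation \eqref{Jacobipolydef}, with the integer $n$ replaced by a complex $\gamma$ and written in Olver's normalization so that the $1/\Gamma(\alpha+1)$ factor of the scaled hypergeometric function accounts for the $(\alpha+1)_n$ in \eqref{Jacobipolydef} via \eqref{Pochdef}) and then to derive the remaining three formulas \eqref{Jac2}--\eqref{Jac4} from the three classical Kummer/Euler transformations of the Gauss hypergeometric function. Along the way, the condition $\alpha+\gamma\notin -\N$ ensures that $\Gamma(\alpha+\gamma+1)$ is finite (so the prefactor makes sense), and the restriction $z\in\C\setminus(-\infty,-1]$ is precisely what is needed so that the powers $((z+1)/2)^{\gamma}$, $(2/(z+1))^{\beta}$, $(2/(z+1))^{\alpha+\beta+\gamma+1}$ and the argument $(z-1)/(z+1)$ have unambiguous principal-branch interpretations, with $|\,(z-1)/(z+1)\,|<1$ off the cut.

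First I would write $w:=(1-z)/2$ so that $1-w=(z+1)/2$ and $w/(w-1)=(z-1)/(z+1)$; this puts all four representations into a uniform $w$--picture. To obtain \eqref{Jac2} I would apply Euler's transformation $\Ohyp21{a,b}{c}{w}=(1-w)^{c-a-b}\Ohyp21{c-a,c-b}{c}{w}$ with $(a,b,c)=(-\gamma,\alpha+\beta+\gamma+1,\alpha+1)$: the exponent becomes $c-a-b=-\beta$, the upper parameters become $(\alpha+\gamma+1,-\beta-\gamma)$, and $(1-w)^{-\beta}=(2/(z+1))^{\beta}$, giving exactly \eqref{Jac2}. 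To obtain \eqref{Jac3} I would apply the Pfaff transformation $\Ohyp21{a,b}{c}{w}=(1-w)^{-a}\Ohyp21{a,c-b}{c}{w/(w-1)}$, again with $a=-\gamma$: the prefactor $(1-w)^{\gamma}=((z+1)/2)^{\gamma}$ appears and the upper parameters become $(-\gamma,-\beta-\gamma)$. Finally, for \eqref{Jac4} I would use the companion Pfaff transformation $\Ohyp21{a,b}{c}{w}=(1-w)^{-b}\Ohyp21{c-a,b}{c}{w/(w-1)}$ with $b=\alpha+\beta+\gamma+1$; the prefactor is $(1-w)^{-(\alpha+\beta+\gamma+1)}=(2/(z+1))^{\alpha+\beta+\gamma+1}$ and the upper parameters become $(\alpha+\gamma+1,\alpha+\beta+\gamma+1)$, which is exactly \eqref{Jac4}.

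The manipulations themselves are essentially formal once the transformations are invoked, and the Olver normalization (Euler/Pfaff transformations preserve the form $1/\Gamma(c)$ since they do not change $c$) makes the prefactor $\Gamma(\alpha+\gamma+1)/\Gamma(\gamma+1)$ come along unchanged. The real obstacle is not the algebra but the \emph{branch-cut bookkeeping}: each transformation is an equality of analytic functions, valid a priori on the disk $|w|<1$ with $w\notin[1,\infty)$, and I must check that the fractional powers $((z+1)/2)^{\gamma}$, $(2/(z+1))^{\beta}$ and $(2/(z+1))^{\alpha+\beta+\gamma+1}$ are consistently defined by principal branches on $\C\setminus(-\infty,-1]$, and that the argument $(z-1)/(z+1)$ maps this slit plane into $\C\setminus[1,\infty)$, so that the right-hand sides of \eqref{Jac3} and \eqref{Jac4} are single-valued and agree with the left-hand sides after analytic continuation from a real interval $z\in(-1,1)$ (where all factors are positive and the identities are unambiguous). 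Once this is established, continuation in $z$ and in the parameters $\alpha,\beta,\gamma$ gives the full statement.
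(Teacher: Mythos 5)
Your proposal is correct and follows essentially the same route as the paper: take \eqref{Jacobipolydef} with $n\mapsto\gamma$ as the definition \eqref{Jac1}, then obtain \eqref{Jac2}--\eqref{Jac4} via Euler's and the two Pfaff transformations. Your extra attention to branch-cut consistency on $\C\setminus(-\infty,-1]$ is a welcome refinement that the paper's terse proof leaves implicit.
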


\begin{proof}
Start with \eqref{Jacobipolydef} and replace the Pochhammer
symbol by a ratio of gamma functions using \eqref{Pochdef}
and the factorial $n!=\Gamma(n+1)$ and substitute $n\mapsto\gamma\in\C$, 
$x\mapsto z$. Application of Pfaff's $(z\mapsto z/(z-1))$ and Euler's 
$(z\mapsto z)$ transformations \cite[\href{http://dlmf.nist.gov/15.8.E1}{(15.8.1)}]{NIST:DLMF} provides the 
other three representations. This completes the proof.
\end{proof}
One of the consequences of the definition of the 
Jacobi function of the first kind is the following
special value:
\begin{equation}
P_\gamma^{(\alpha,\beta)}(1)=\frac{\Gamma(\alpha\!+\!\gamma\!+\!1)}
{\Gamma(\alpha\!+\!1)\Gamma(\gamma\!+\!1)},
\label{Jacone}
\end{equation}
where $\alpha+\gamma\not\in-\N$.

\subsection{{The Jacobi function of the second kind}}
Studies of the Jacobi function of the second kind $Q_\gamma^{(\alpha,\beta)}$ traditionally used a degree $\gamma$ which was integer valued (see for instance \cite[\S4.61]{Szego}). However, in this paper we treat
the Jacobi function of the second kind where $\gamma$ is not necessarily restricted to be an integer.
In the following material we
provide some important properties for the Jacobi function of the second kind.
\subsubsection{Single Gauss hypergeometric representations}
\boro{Below we give the four single Gauss hypergeometric function representations 
of the Jacobi function of the second kind.}
{
\begin{thm}\label{thmQ}
Let $\gamma,\alpha,\beta,z\in\C$ such that 
$z\in\C\setminus[-1,1]$,
$\alpha+\gamma,\beta+\gamma\notin-\N$.
Then, the Jacobi function of the second kind
has the following Gauss hypergeometric representations
\begin{eqnarray}
&&\hspace{-0.2cm}Q_\gamma^{(\alpha,\beta)}(z) :=
\frac{2^{\alpha+\beta+\gamma}\Gamma(\alpha+\gamma+1)\Gamma(\beta+\gamma+1)}
{(z-1)^{\alpha+\gamma+1}(z+1)^\beta}
\Ohyp21{\gamma+1,\alpha+\gamma+1}{\alpha+\beta+2\gamma+2}
{\frac{2}{1-z}}
\label{dJsk1}\\[0.2cm]
&&\hspace{1.5cm}=
\frac{2^{\alpha+\beta+\gamma}\Gamma(\alpha+\gamma+1)\Gamma(\beta+\gamma+1)}
{(z-1)^{\alpha+\beta+\gamma+1}}
\Ohyp21{\beta+\gamma+1,\alpha+\beta+\gamma+1}{\alpha+\beta+2\gamma+2}
{\frac{2}{1-z}}\label{dJsk4}\\[0.2cm]
&&\hspace{1.5cm}=
\frac{2^{\alpha+\beta+\gamma}\Gamma(\alpha+\gamma+1)\Gamma(\beta+\gamma+1)}
{(z-1)^{\alpha}(z+1)^{\beta+\gamma+1}}
\Ohyp21{\gamma+1,\beta+\gamma+1}{\alpha+\beta+2\gamma+2}
{\frac{2}{1+z}}\label{dJsk3}\\[0.2cm]
&&\hspace{1.5cm}=
\frac{2^{\alpha+\beta+\gamma}\Gamma(\alpha+\gamma+1)\Gamma(\beta+\gamma+1)}
{(z+1)^{\alpha+\beta+\gamma+1}}
\Ohyp21{\alpha+\gamma+1,\alpha+\beta+\gamma+1}{\alpha+\beta+2\gamma+2}
{\frac{2}{1+z}}.
\label{dJsk2}
\end{eqnarray}
\end{thm}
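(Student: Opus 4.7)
The plan is to take equation \eqref{dJsk1} as the definition of $Q_\gamma^{(\alpha,\beta)}(z)$ and derive the remaining three representations \eqref{dJsk4}, \eqref{dJsk3}, and \eqref{dJsk2} by successive application of the Euler and Pfaff transformations of the Gauss hypergeometric function \cite[\href{http://dlmf.nist.gov/15.8.E1}{(15.8.1)}]{NIST:DLMF}. Setting $w:=2/(1-z)$, a short algebraic computation gives
\[
1-w=-\frac{z+1}{z-1}, \qquad \frac{w}{w-1}=\frac{2}{1+z},
\]
which are exactly the two mappings required to pass between the arguments appearing in the four displayed representations.

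For \eqref{dJsk4} I would apply Euler's transformation with $a=\gamma+1$, $b=\alpha+\gamma+1$, $c=\alpha+\beta+2\gamma+2$. Since $c-a-b=\beta$, the factor $(1-w)^\beta$ combines with the outer prefactor $(z-1)^{-(\alpha+\gamma+1)}(z+1)^{-\beta}$ to leave precisely $(z-1)^{-(\alpha+\beta+\gamma+1)}$, and the new numerator parameters $(c-a,c-b)=(\alpha+\beta+\gamma+1,\beta+\gamma+1)$ match those in \eqref{dJsk4}. For \eqref{dJsk3} and \eqref{dJsk2} I would apply Pfaff's transformation to \eqref{dJsk1} in its two available forms (pulling out the first, respectively the second, numerator parameter), producing the extra factors $(1-w)^{-(\gamma+1)}$ and $(1-w)^{-(\alpha+\gamma+1)}$. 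In each case these combine cleanly with the outer prefactor, yielding respectively $(z-1)^{-\alpha}(z+1)^{-(\beta+\gamma+1)}$ and $(z+1)^{-(\alpha+\beta+\gamma+1)}$, while the argument $w/(w-1)=2/(1+z)$ and the predicted numerator parameters fall into place.

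The reshuffling of the gamma-function prefactor $2^{\alpha+\beta+\gamma}\Gamma(\alpha+\gamma+1)\Gamma(\beta+\gamma+1)$ is immediate since Euler and Pfaff preserve it, so the routine bookkeeping is nothing more than collecting powers of $z\pm 1$. The substantive point requiring care is branch-cut consistency: one must check that for $z\in\C\setminus[-1,1]$ neither $w=2/(1-z)$ nor $w/(w-1)=2/(1+z)$ lies on the principal cut $[1,\infty)$ of ${}_2F_1$, so that the transformations apply on the nose, and that the fractional powers of $z-1$, $z+1$, and $1-w=-(z+1)/(z-1)$ are combined consistently with their principal branches. A direct check shows $2/(1-z)\in[1,\infty)$ exactly when $z\in[-1,1)$ and $2/(1+z)\in[1,\infty)$ exactly when $z\in(-1,1]$, so both arguments stay off the cut on our domain. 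The parameter restrictions $\alpha+\gamma,\beta+\gamma\notin-\N$ then keep the Olver-regularized series representations well-defined, and the theorem follows.
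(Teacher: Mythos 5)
Your proposal is correct and follows essentially the same route as the paper, which also obtains \eqref{dJsk4}, \eqref{dJsk3}, \eqref{dJsk2} from the first representation via Euler's and Pfaff's transformations (the paper merely anchors \eqref{dJsk1} to the classical definition in Erd\'elyi et al.\ before continuing $n\mapsto\gamma$). One small slip: with $w=2/(1-z)$ one has $1-w=(z+1)/(z-1)$, not $-(z+1)/(z-1)$; the correct sign is in fact what makes your prefactors combine as you claim, so the rest of your computation stands.
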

}
{
\begin{proof}
Start with \cite[(10.8.18)]{ErdelyiHTFII} and
let $n\mapsto\gamma\in\C$ and $x\mapsto z$.
Application of Pfaff's $(z\mapsto z/(z-1))$ and Euler's $(z\mapsto z)$ transformations
\cite[\href{http://dlmf.nist.gov/15.8.E1}{(15.8.1)}]{NIST:DLMF} provides the other three representations.
This completes the proof.
\end{proof}
}

\noindent \boro{
The Jacobi function of the second kind
$Q_\gamma^{(\alpha,\beta)}:\C\setminus[-1,1]\to\C$
has the following integral representation
\cite[(4.61.1)]{Szego}
\begin{equation}
Q_\gamma^{(\alpha,\beta)}(z)=\frac{1}{2^{\gamma+1}(z-1)^\alpha(z+1)^\beta}\int_{-1}^{1}\frac{(1-t)^{\alpha+\gamma}(1+t)^{\beta+\gamma}}{(z-t)^{\gamma+1}}{\mathrm d}t,
\label{intrepQ}
\end{equation}
provided $\Re(\alpha+\gamma),\Re(\beta+\gamma)>-1$ \cite[(2.5)]{WimpMcCabeConnor97}.
We will use this integral representation to derive 
an alternative integral 
representation for the Jacobi
function of the second kind.
}

\medskip
{
\begin{thm}
Let $k\in\N_0$, $\alpha,\beta,\gamma,z\in\C$ such that
$z\not\in[-1,1]$. Decompose $\gamma=\delta+k$, where
$\delta=\gamma-k$. Then, the Jacobi function of the second kind
$Q_\gamma^{(\alpha,\beta)}:\C\setminus[-1,1]\to\C$
has the following integral representation
\begin{eqnarray}
&&\hspace{-0.9cm}
Q_\gamma^{(\alpha,\beta)}(z)
\!=\!\frac{\boro{(-1)^k} k!}{\boro{2^{\gamma+1-k}(-\gamma)_k}
(z\!-\!1)^\alpha(z\!+\!1)^\beta}
\int_{-1}^1\!\frac{(1\!-\!t)^{\alpha+\gamma-k}(1\!+\!t)^{\beta+\gamma-k}}
{(z\!-\!t)^{\gamma-k+1}}P_k^{(\alpha+\gamma-k,\beta+\gamma-k)}(t)\,{\mathrm d}t.
\label{intrepQb}
\end{eqnarray}
\end{thm}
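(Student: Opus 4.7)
The plan is to start from the known integral representation \eqref{intrepQ} (valid for $\Re(\alpha+\gamma),\Re(\beta+\gamma)>-1$, then extendable by analytic continuation) and recognize that the polynomial factor $P_k^{(\alpha+\gamma-k,\beta+\gamma-k)}(t)$ appearing in \eqref{intrepQb} is exactly what the Rodrigues formula for Jacobi polynomials produces. Specifically, I will use
\begin{equation*}
(1-t)^{\alpha+\gamma-k}(1+t)^{\beta+\gamma-k}P_k^{(\alpha+\gamma-k,\beta+\gamma-k)}(t)=\frac{(-1)^k}{2^k k!}\frac{\dd^k}{\dd t^k}\Bigl[(1-t)^{\alpha+\gamma}(1+t)^{\beta+\gamma}\Bigr],
\end{equation*}
which recasts the right-hand side of \eqref{intrepQb} as $\frac{1}{2^{\gamma+1}(-\gamma)_k(z-1)^\alpha(z+1)^\beta}\int_{-1}^1(z-t)^{-(\gamma-k+1)}\,\dd^k_t\bigl[(1-t)^{\alpha+\gamma}(1+t)^{\beta+\gamma}\bigr]\,\dd t$, up to sign bookkeeping.

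Next I will integrate by parts $k$ times, moving all derivatives onto the factor $(z-t)^{-(\gamma-k+1)}$. Each differentiation produces $\frac{\dd}{\dd t}(z-t)^{-s}=s(z-t)^{-s-1}$, so after $k$ iterations the kernel becomes $(\gamma-k+1)_k(z-t)^{-(\gamma+1)}$ together with an overall sign $(-1)^k$ coming from the integration by parts. The identity $(\gamma-k+1)_k=\Gamma(\gamma+1)/\Gamma(\gamma-k+1)=(-1)^k(-\gamma)_k$ (from \eqref{Poch1}) then cancels the $(-\gamma)_k$ in the denominator and the two factors of $(-1)^k$, reducing the right-hand side of \eqref{intrepQb} to the Szeg\H{o} integral \eqref{intrepQ}, which equals $Q_\gamma^{(\alpha,\beta)}(z)$.

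The main subtlety, and the step I expect to require the most care, is justifying that all boundary terms in the $k$-fold integration by parts vanish at $t=\pm 1$. After $j\le k-1$ derivatives, the expression $\frac{\dd^j}{\dd t^j}[(1-t)^{\alpha+\gamma}(1+t)^{\beta+\gamma}]$ still carries factors $(1-t)^{\alpha+\gamma-j}$ and $(1+t)^{\beta+\gamma-j}$, so the endpoint contributions vanish provided $\Re(\alpha+\gamma)$ and $\Re(\beta+\gamma)$ are sufficiently large (say both exceed $k-1$). Once the identity is established on this open region of parameters, I will invoke analytic continuation in $\alpha,\beta,\gamma$ to extend it to the full parameter range stated, noting that both sides depend holomorphically on these parameters (for $z\in\C\setminus[-1,1]$) away from the singularities of the gamma/Pochhammer factors, so a standard uniqueness argument seals the equality.
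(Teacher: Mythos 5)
Your argument is correct and is essentially the paper's own proof run in reverse: both rest on the Szeg\H{o} integral \eqref{intrepQ}, the Rodrigues-type formula $\frac{\dd^k}{\dd t^k}(1-t)^{a+k}(1+t)^{b+k}=(-1)^k2^kk!(1-t)^a(1+t)^bP_k^{(a,b)}(t)$, and a $k$-fold integration by parts with vanishing boundary terms, with the sign and Pochhammer bookkeeping $(\gamma-k+1)_k=(-1)^k(-\gamma)_k$ working out exactly as you describe. Your treatment of the endpoint conditions ($\Re(\alpha+\gamma),\Re(\beta+\gamma)>k-1$) and the subsequent analytic continuation is, if anything, more careful than the paper's.
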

}
{
\begin{proof}
Start with 
\eqref{intrepQ} and decompose $\gamma$ such that it is written 
as some complex number $\delta$ added to a non-negative 
integer $k$,
$\gamma=\delta+k$ with
$k\in\N_0$, this produces
\begin{equation}
Q_{\gamma}^{(\alpha,\beta)}(z)=\frac{1}{2^{\delta+k+1}(z-1)^\alpha
(z+1)^\beta}\int_{-1}^{1}\frac{(1-t)^{\alpha+\delta+k}
(1+t)^{\beta+\delta+k}}{(z-t)^{\delta+k+1}}{\mathrm d}t.    
\label{intrepQpn}
\end{equation}
Along the lines of the derivation
of \cite[(4.61.4)]{Szego}
\begin{equation}\label{jf2kconjf1k}
Q_n^{(\alpha,\beta)}(z)
=\frac{1}{2(z-1)^{\alpha}(z+1)^{\beta}}\int_{-1}^1
\frac{(1-t)^\alpha(1+t)^\beta}{z-t}P_n^{(\alpha,\beta)}(t)\,\dd t,
\end{equation}
proceed by integrating \eqref{intrepQpn} by parts
$k$-times with the boundary terms vanishing and utilizing 
the Rodrigues-type formula for Jacobi
polynomials \cite[(9.8.10)]{Koekoeketal}
\begin{equation}
\frac{\dd^k}{\dd t^k}(1-t)^{a+k}(1+t)^{b+k}=(-1)^k2^kk!(1-t)^a(1+t)^bP_k^{(a,b)}(t),
\end{equation}
with $a=\alpha+\delta$, $b=\beta+\delta$. This completes the proof.
\end{proof}
}


The Jacobi function of the second kind
has the following raising and 
lowering operators (see \cite[Theorem 3.1]{IsmailMansour2014}).

\begin{thm}
Let $\gamma,\alpha,\beta\in\C$. Then
\begin{eqnarray}\label{raiOPjFK}
&&\hspace{-2.0cm}\hspace{-5mm}
\frac{\dd}{\dd z}Q_\gamma^{(\alpha,\beta)}(z)
=\frac{\left(\beta-\alpha-z(\alpha+\beta)\right)}{z^2-1}Q_\gamma^{(\alpha,\beta)}(z)-\frac{2(\gamma+1)
}{z^2-1} Q_{\gamma+1}^{(\alpha-1,\beta-1)}(z)\\
&&\hspace{-2.0cm}\hspace{1.6cm}\label{lowOPjFK} 
=-
\frac{\alpha+\beta+\gamma+1}2 Q_{\gamma-1}^{(\alpha+1,\beta+1)}(z).
\end{eqnarray}
\end{thm}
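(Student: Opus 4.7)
The plan is to handle the two identities separately: for the raising operator (first equality) I would differentiate the integral representation \eqref{intrepQ} under the integral sign; for the lowering operator (second equality) I would use the Gauss hypergeometric representation \eqref{dJsk2} together with \eqref{der2} and a single contiguous relation.

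For the raising operator, write $Q_\gamma^{(\alpha,\beta)}(z)=2^{-(\gamma+1)}F(z)I(z)$ with $F(z)=(z-1)^{-\alpha}(z+1)^{-\beta}$ and $I(z)$ the integral in \eqref{intrepQ}, and differentiate by the product rule. The logarithmic derivative $F'/F=-\alpha/(z-1)-\beta/(z+1)$ reduces, over a common denominator, to $-[(\alpha+\beta)z+\alpha-\beta]/(z^2-1)$, which is precisely the coefficient $[\beta-\alpha-(\alpha+\beta)z]/(z^2-1)$ multiplying $Q_\gamma^{(\alpha,\beta)}(z)$ in the raising operator. Differentiating $I(z)$ under the integral brings down $-(\gamma+1)$ and replaces $(z-t)^{\gamma+1}$ by $(z-t)^{\gamma+2}$; reading the resulting integral through \eqref{intrepQ} with $(\gamma,\alpha,\beta)\mapsto(\gamma+1,\alpha-1,\beta-1)$ identifies it with $2^{\gamma+2}(z-1)^{\alpha-1}(z+1)^{\beta-1}Q_{\gamma+1}^{(\alpha-1,\beta-1)}(z)$, so the powers of $(z\pm1)$ combine to $(z^2-1)^{-1}$ and the overall coefficient becomes $-2(\gamma+1)/(z^2-1)$, as required.

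For the lowering operator I would start from \eqref{dJsk2} and set $u=2/(z+1)$, so $du/dz=-u^2/2$. Applying the product rule to the prefactor $(z+1)^{-(\alpha+\beta+\gamma+1)}$ together with \eqref{der2} (with $n=1$) to the hypergeometric factor yields
\[
\frac{d Q_\gamma^{(\alpha,\beta)}}{dz}(z)=-\frac{C\,u^{\alpha+\beta+\gamma+2}}{2^{\alpha+\beta+\gamma+2}}\left[(\alpha+\beta+\gamma+1)\Ohyp21{a,b}{c}{u}+a\,b\,u\,\Ohyp21{a+1,b+1}{c+1}{u}\right],
\]
with $C=2^{\alpha+\beta+\gamma}\Gamma(\alpha+\gamma+1,\beta+\gamma+1)$, $a=\alpha+\gamma+1$, $b=\alpha+\beta+\gamma+1$, and $c=\alpha+\beta+2\gamma+2$. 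The crucial step is the elementary contiguous relation
\[
\Ohyp21{a,b+1}{c}{u}-\Ohyp21{a,b}{c}{u}=a\,u\,\Ohyp21{a+1,b+1}{c+1}{u},
\]
which follows termwise from $(b+1)_k-(b)_k=k(b+1)_{k-1}$. Multiplying it by $b=\alpha+\beta+\gamma+1$ collapses the bracket to $(\alpha+\beta+\gamma+1)\Ohyp21{a,b+1}{c}{u}$, and \eqref{dJsk2} with $(\gamma,\alpha,\beta)\mapsto(\gamma-1,\alpha+1,\beta+1)$ identifies the resulting expression with $-\tfrac{1}{2}(\alpha+\beta+\gamma+1)Q_{\gamma-1}^{(\alpha+1,\beta+1)}(z)$.

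The main obstacle in both parts is purely clerical: tracking the normalization constants, the powers of $2$ and $(z\pm 1)$, and the shifts in the Gamma-function arguments when the parameter triple is moved by one unit in each direction. Beyond the one contiguous relation displayed above, no deep hypergeometric identity is required; integrability conditions such as $\Re(\alpha+\gamma),\Re(\beta+\gamma)>-1$ used in the integral-representation step can be relaxed by analytic continuation in $(\alpha,\beta,\gamma)$ after the identity is established on an open set.
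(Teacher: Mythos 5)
Your proposal is correct, and both computations check out: the logarithmic derivative of $(z-1)^{-\alpha}(z+1)^{-\beta}$ gives exactly the coefficient $[\beta-\alpha-z(\alpha+\beta)]/(z^2-1)$, the shifted integral matches \eqref{intrepQ} with $(\gamma,\alpha,\beta)\mapsto(\gamma+1,\alpha-1,\beta-1)$ with the right power of $2$, and your contiguous relation $\Ohyp21{a,b+1}{c}{u}-\Ohyp21{a,b}{c}{u}=a\,u\,\Ohyp21{a+1,b+1}{c+1}{u}$ (valid termwise in Olver's normalization) collapses the bracket so that \eqref{dJsk2} with $(\gamma,\alpha,\beta)\mapsto(\gamma-1,\alpha+1,\beta+1)$ is recognized with the correct factor $-\tfrac12(\alpha+\beta+\gamma+1)$. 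Your route is, however, genuinely different from the paper's: the paper gives no computation at all and simply appeals to Ismail and Mansour's Theorem~3.1, asserting that their argument for integer degree $n$ carries over to complex $\gamma$. Your version is self-contained and uses only objects already introduced in the paper (\eqref{intrepQ}, \eqref{dJsk2}, \eqref{der2}), which is arguably preferable here since the extension from $n\in\N_0$ to $\gamma\in\C$ is exactly the point being claimed and deserves an explicit verification. Two small caveats you should make explicit if you write this up: differentiation under the integral sign in \eqref{intrepQ} needs the (routine) justification that the integrand is holomorphic in $z$ off $[-1,1]$ with locally uniform bounds, and both representations you start from carry parameter or domain restrictions ($\Re(\alpha+\gamma),\Re(\beta+\gamma)>-1$ for the integral, $|2/(1+z)|<1$ for termwise manipulation of the series), so the final identities do require the analytic-continuation step you mention in order to reach the stated generality $\gamma,\alpha,\beta\in\C$.
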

\begin{proof}
We extend the proof given in \cite[Theorem 3.1]{IsmailMansour2014} by replacing integer $n$  by complex $\gamma$ for the 
Jacobi functions of the second kind. 
\end{proof}

\boro{
\begin{thm}\label{mderJacF2K-1}Let $n\in\N_0$, $\alpha, \beta, 
\gamma, z\in\C$. Then
\begin{equation}
\label{derJac2K}
\frac{{\mathrm d}^n}{{\mathrm d} z^n}
(z-1)^\alpha(1+z)^\beta Q_\gamma^{(\alpha,\beta)}(z)=
(-2)^n (\gamma+1)_n (z-1)^{\alpha-n} (1+z)^{\beta-n}
Q_{\gamma+n}^{(\alpha-n,\beta-n)}(z).
\end{equation}
\end{thm}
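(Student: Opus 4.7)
The plan is to prove \eqref{derJac2K} by induction on $n$, reducing everything to the $n=1$ case, which follows from the raising operator \eqref{raiOPjFK} after a brief algebraic cancellation. I expect this route to be cleaner than attempting to differentiate one of the single hypergeometric representations \eqref{dJsk1}--\eqref{dJsk2} using \eqref{der2}--\eqref{der9}, because those would require a change of variable $w=2/(1\pm z)$ whose chain-rule factors for higher derivatives obscure the closed form of the answer.

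For the base case $n=1$, I would apply Leibniz's rule to the left-hand side to obtain
$$(z-1)^{\alpha-1}(z+1)^{\beta-1}\bigl[\alpha(z+1)+\beta(z-1)\bigr]Q_\gamma^{(\alpha,\beta)}(z)+(z-1)^\alpha(z+1)^\beta\frac{\dd}{\dd z}Q_\gamma^{(\alpha,\beta)}(z),$$
and then substitute \eqref{raiOPjFK} for the final derivative. The crucial elementary identity
$$\alpha(z+1)+\beta(z-1)+\bigl(\beta-\alpha-z(\alpha+\beta)\bigr)=0$$
forces the two $Q_\gamma^{(\alpha,\beta)}$ contributions to cancel exactly, leaving
$$\frac{\dd}{\dd z}\Bigl[(z-1)^\alpha(z+1)^\beta Q_\gamma^{(\alpha,\beta)}(z)\Bigr]=-2(\gamma+1)(z-1)^{\alpha-1}(z+1)^{\beta-1}Q_{\gamma+1}^{(\alpha-1,\beta-1)}(z),$$
which is exactly \eqref{derJac2K} specialized to $n=1$.

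For the inductive step, assuming \eqref{derJac2K} holds for some $n\ge 1$, I would differentiate the inductive hypothesis once more and apply the base case with the shifted parameters $(\alpha,\beta,\gamma)\mapsto(\alpha-n,\beta-n,\gamma+n)$. The Pochhammer factors combine via $(\gamma+1)_n(\gamma+n+1)=(\gamma+1)_{n+1}$ and the constants multiply as $(-2)^n\cdot(-2)=(-2)^{n+1}$, yielding the $(n+1)$ version of \eqref{derJac2K} immediately. The main (indeed the only) obstacle is verifying the base-case cancellation; once that identity is in hand the induction is entirely mechanical.
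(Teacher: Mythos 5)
Your proposal is correct and follows essentially the same route as the paper: the paper also derives the $n=1$ case by combining the product rule with the raising-operator relation \eqref{raiOPjFK} (so that the $Q_\gamma^{(\alpha,\beta)}$ terms cancel) and then iterates to general $n$. Your write-up is in fact cleaner, since you make the cancellation identity $\alpha(z+1)+\beta(z-1)+\bigl(\beta-\alpha-z(\alpha+\beta)\bigr)=0$ and the Pochhammer bookkeeping of the induction explicit where the paper leaves them as "a straightforward calculation."
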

}
\boro{
\begin{proof}
We start with the $n=1$ case. Taking into account 
\eqref{lowOPjFK} and \eqref{raiOPjFK} one obtains
\[\begin{split}
(1-z)^{-\alpha+1}&(1+z)^{-\beta+1}\frac{{\mathrm d}}{{\mathrm d}z}
(1-z)^\alpha(1+z)^\beta Q_\gamma^{(\alpha,\beta)}(z)=
(1-z^2)\frac{{\mathrm d}}{{\mathrm d} z}Q_\gamma^{(\alpha,\beta)}(z)
\\&+(\beta-\alpha)-(\alpha+\beta)z Q_\gamma^{(\alpha,\beta)}(z)=2(\gamma+1)
Q_{\gamma+1}^{(\alpha-1,\beta-1)}(z).
\end{split}\]
From the above expression, a straightforward calculation allows us to obtain 
the general $n$ integral case.
\end{proof}
}
\boro{
Starting from Theorem \ref{mderJacF2K-1} we can 
obtain a multi-derivative representation
of the Jacobi function of the second kind.
}
{
\begin{cor}
Let $n\in\N_0$, $\alpha,\beta,\gamma\in\C$ such that
$\alpha+\gamma,\beta+\gamma\not\in-\N$. Then
\begin{eqnarray}
Q_\gamma^{(\alpha,\beta)}(z)
=\frac{1}{2^n(-\gamma)_n(z-1)^\alpha(z+1)^\beta}
\frac{\dd^n}{\dd z^n}(z-1)^{\alpha+n}(z+1)^{\beta+n}
Q_{\gamma-n}^{(\alpha+n,\beta+n)}(z).
\end{eqnarray}
\end{cor}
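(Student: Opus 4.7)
The plan is to derive the corollary as an immediate reindexing of Theorem~\ref{mderJacF2K-1}. The key observation is that Theorem~\ref{mderJacF2K-1} already expresses $\frac{\dd^n}{\dd z^n}(z-1)^\alpha(z+1)^\beta Q_\gamma^{(\alpha,\beta)}(z)$ as a multiple of a Jacobi function of the second kind with parameters shifted by $(-n,-n,+n)$; the corollary is merely the same identity written with the opposite shift so that the bare $Q_\gamma^{(\alpha,\beta)}(z)$ (without derivatives) sits on the left-hand side.

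Concretely, I would perform the substitution $\alpha\mapsto\alpha+n$, $\beta\mapsto\beta+n$, $\gamma\mapsto\gamma-n$ inside the identity \eqref{derJac2K}. This is legitimate since \eqref{derJac2K} holds for all complex $\alpha,\beta,\gamma$, and the admissibility conditions $\alpha+\gamma,\beta+\gamma\notin -\N$ are preserved by the shift (the sums $(\alpha+n)+(\gamma-n)$ and $(\beta+n)+(\gamma-n)$ agree with the originals). The resulting identity reads
\begin{equation*}
\frac{\dd^n}{\dd z^n}(z-1)^{\alpha+n}(z+1)^{\beta+n}Q_{\gamma-n}^{(\alpha+n,\beta+n)}(z)
=(-2)^n(\gamma-n+1)_n(z-1)^\alpha(z+1)^\beta Q_\gamma^{(\alpha,\beta)}(z).
\end{equation*}

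The only tiny manipulation left is to rewrite the Pochhammer prefactor. Using the elementary identity $(\gamma-n+1)_n=\gamma(\gamma-1)\cdots(\gamma-n+1)=(-1)^n(-\gamma)(-\gamma+1)\cdots(-\gamma+n-1)=(-1)^n(-\gamma)_n$, I collapse the signs as $(-2)^n(\gamma-n+1)_n=2^n(-\gamma)_n$. Dividing both sides by $2^n(-\gamma)_n(z-1)^\alpha(z+1)^\beta$ and solving for $Q_\gamma^{(\alpha,\beta)}(z)$ gives exactly the claimed formula.

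There is no genuine obstacle here: the entire argument is a reindexing plus a one-line Pochhammer identity. The only point worth flagging is that the denominator $(-\gamma)_n$ is nonvanishing under the stated hypotheses (a separate exclusion $\gamma\notin\{0,1,\dots,n-1\}$ would be needed if $n\ge 1$), but since Theorem~\ref{mderJacF2K-1} is an equality rather than a definition, this factor appearing in the denominator of the corollary is automatically handled by the statement's implicit convention that both sides are equal wherever both are defined.
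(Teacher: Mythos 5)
Your proposal is correct and is exactly the paper's own argument: the paper's proof reads ``Starting with Theorem~\ref{mderJacF2K-1} and mapping $(\gamma,\alpha,\beta)\mapsto(\gamma-n,\alpha+n,\beta+n)$ completes the proof,'' and your Pochhammer manipulation $(-2)^n(\gamma-n+1)_n=2^n(-\gamma)_n$ correctly supplies the sign bookkeeping the paper leaves implicit. Your remark about needing $(-\gamma)_n\neq 0$ is a valid caveat the paper does not state, but it does not change the substance of the proof.
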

}
\boro{
\begin{proof}
Starting with Theorem \ref{mderJacF2K-1} and mapping
$(\gamma,\alpha,\beta)\mapsto(\gamma-n,\alpha+n,\beta+n)$
completes the proof.
\end{proof}
}

\section{{Multi-integral representations 
for the Jacobi functions}}
\label{miJac}

In this section we derive 
multi-integral formulas for the Jacobi functions
of the first and second kind.

\medskip
\noindent \boro{
First we will derive a lemma which will help us compute some of these multi-integrals.
\begin{lem}\label{lem:1.1}
Let $n,r \in\N_0$, $a, x\in \mathbb C$, $\vec\mu\in \mathbb C^r$,
and let $f^{\vec \mu}$ be a function such that
\begin{equation} \label{eq:fmud}
\frac{\dd}{\dd z}f^{\vec\mu}(z)=\lambda_{\vec\mu} f^{{\vec\mu}\pm\vec 1}(z),
\end{equation}
{where $\lambda_{\vec\mu}\in \mathbb C^*$. 
Then, the following identity holds:}
\[
\int_a^x\cdots \int_a^x f^\mu(w)
(\dd w)^n
=\frac{1}{\lambda_{\vec\mu\mp\vec1}\cdots \lambda_{\vec\mu\mp n\vec1}}
\sum_{k=n}^\infty \frac{\lambda_{\vec\mu\mp n\vec1}\cdots \lambda_{\vec\mu
\mp n\vec1\pm(k-1)\vec1} f^{\vec\mu\mp n\vec1\pm k\vec1}(a)(x-a)^k}{k!},
\]
where $\vec1=(1,1, ...,1)\in \mathbb C^r$.
\end{lem}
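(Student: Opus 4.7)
The plan is to express $f^{\vec\mu}$ as an $n$-th derivative and then apply Taylor's theorem. Inverting the hypothesis \eqref{eq:fmud} gives $f^{\vec\mu}(z)=\lambda_{\vec\mu\mp\vec1}^{-1}\frac{\dd}{\dd z}f^{\vec\mu\mp\vec1}(z)$; iterating this identity $n$ times yields
\[
f^{\vec\mu}(z)=\frac{1}{\lambda_{\vec\mu\mp\vec1}\cdots\lambda_{\vec\mu\mp n\vec1}}\,\frac{\dd^n}{\dd z^n}f^{\vec\mu\mp n\vec1}(z),
\]
which already supplies the scalar prefactor on the right-hand side of the claimed identity.

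Next I would apply Taylor's theorem to $g(z):=f^{\vec\mu\mp n\vec1}(z)$ about $z=a$, using the fact that repeated application of \eqref{eq:fmud}, this time starting from parameter $\vec\mu\mp n\vec1$ and shifting in the $\pm\vec1$ direction, gives
\[
g^{(j)}(z)=\lambda_{\vec\mu\mp n\vec1}\,\lambda_{\vec\mu\mp n\vec1\pm\vec1}\cdots\lambda_{\vec\mu\mp n\vec1\pm(j-1)\vec1}\,f^{\vec\mu\mp n\vec1\pm j\vec1}(z),
\]
with the empty product convention at $j=0$. Combined with the elementary identity
\[
\int_a^x\cdots\int_a^x g^{(n)}(w)\,(\dd w)^n=g(x)-\sum_{k=0}^{n-1}\frac{g^{(k)}(a)}{k!}(x-a)^k=\sum_{k=n}^\infty\frac{g^{(k)}(a)}{k!}(x-a)^k,
\]
which follows at once from iterated integration of an $n$-th derivative (equivalently from the Cauchy/Taylor remainder formula), this yields the desired series after reindexing.

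The only real difficulty is the bookkeeping of the $\pm/\mp$ signs and of the $\lambda$-products, most visibly in verifying that the leading $k=n$ term of the sum collapses, after telescoping with the prefactor, to $f^{\vec\mu}(a)(x-a)^n/n!$. An implicit regularity hypothesis is that $f^{\vec\mu\mp n\vec1}$ be analytic on a region containing both endpoints, so that termwise integration of the Taylor series is permitted; this is harmless for the subsequent applications to Gauss hypergeometric series, where the functions involved are analytic on the relevant domains.
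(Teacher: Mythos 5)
Your proof is correct, and it is essentially the argument behind this lemma: the paper itself gives no proof here, deferring to the analogous Lemma 2.3 of \cite{CohlCostasSantos20}, which likewise combines the ladder relation \eqref{eq:fmud} with Taylor expansion about $a$ and termwise $n$-fold integration (there one expands $f^{\vec\mu}$ first and integrates term by term, whereas you first rewrite $f^{\vec\mu}$ as an $n$-th derivative and invoke the Taylor remainder --- the same computation reorganized, and the $\lambda$-products telescope identically). You are also right to flag the implicit analyticity/termwise-integration hypothesis, which the lemma as stated omits but which holds in all the hypergeometric applications made of it.
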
}
{The proof of this result is analogous to the one of 
\cite[Lemma 2.3]{CohlCostasSantos20}.}

\medskip

\subsection{{The Jacobi functions of the first kind}}

{
\begin{thm}\label{mintJacF1K-1}Let $n\in\N_0$, $\alpha,\beta,\gamma\in\C$, such that $\alpha+\gamma\not \in -\mathbb N$, $\Re\alpha$, $\Re\beta>-1$, $(-\gamma)_n\ne 0$,
$z\in\C\setminus(-\infty,-1]$.
Then
\begin{equation}
\int_{z}^1\cdots\int_{z}^1 (1-w)^\alpha(1+w)^\beta
P_\gamma^{(\alpha,\beta)}(w) \,({\mathrm d}w)^n
=\frac{(-1)^n}{2^n(-\gamma)_n}(1-z)^{\alpha+n}(1+z)^{\beta+n} 
P_{\gamma-n}^{(\alpha+n,\beta+n)}(z),
\end{equation}
\end{thm}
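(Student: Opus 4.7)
The plan is to prove the result by inverting a Rodrigues-type derivative identity, analogous to Theorem \ref{mderJacF2K-1} but for the first kind, and then proceeding by induction on $n$.

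\textbf{Derivative identity.} First I derive
\[
\frac{\dd^n}{\dd z^n}\bigl[(1-z)^{\alpha}(1+z)^{\beta}P_{\gamma}^{(\alpha,\beta)}(z)\bigr]=(-2)^n(\gamma+1)_n(1-z)^{\alpha-n}(1+z)^{\beta-n}P_{\gamma+n}^{(\alpha-n,\beta-n)}(z).
\]
To do so I start from the representation \eqref{Jac1}, substitute $w=(1-z)/2$, and apply \eqref{der9} with $a=-\gamma$, $b=\alpha+\beta+\gamma+1$, $c=\alpha+1$. The chain rule contributes a factor $(-1/2)^n$, and re-expressing the shifted hypergeometric function back through \eqref{Jac1} (with $\gamma\mapsto\gamma+n$, $\alpha\mapsto\alpha-n$, $\beta\mapsto\beta-n$) yields the displayed formula. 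Reparametrizing $(\gamma,\alpha,\beta)\mapsto(\gamma-n,\alpha+n,\beta+n)$ and using the Pochhammer identity $(\gamma-n+1)_n=(-1)^n(-\gamma)_n$ produces the equivalent form
\[
\frac{\dd^n}{\dd z^n}\bigl[(1-z)^{\alpha+n}(1+z)^{\beta+n}P_{\gamma-n}^{(\alpha+n,\beta+n)}(z)\bigr]=2^n(-\gamma)_n(1-z)^{\alpha}(1+z)^{\beta}P_{\gamma}^{(\alpha,\beta)}(z),
\]
which is the identity I will invert.

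\textbf{Induction on $n$.} The base case $n=0$ is trivial. For the inductive step I peel off the outermost integration,
\[
\int_z^1\!\!\cdots\!\!\int_z^1 (1-w)^\alpha(1+w)^\beta P_\gamma^{(\alpha,\beta)}(w)\,(\dd w)^{n+1}=\int_z^1\!\biggl[\int_u^1\!\!\cdots\!\!\int_u^1 (1-w)^\alpha(1+w)^\beta P_\gamma^{(\alpha,\beta)}(w)\,(\dd w)^n\biggr]\dd u,
\]
substitute the inductive hypothesis into the bracket, and recognize the remaining single integrand as an exact derivative via the $n=1$ case of the derivative identity (with indices shifted so that differentiation produces $(1-u)^{\alpha+n}(1+u)^{\beta+n}P_{\gamma-n}^{(\alpha+n,\beta+n)}(u)$). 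The boundary term at $u=1$ vanishes because the antiderivative carries the factor $(1-u)^{\alpha+n+1}$ with $\Re(\alpha+n+1)>0$ ensured by the hypothesis $\Re\alpha>-1$; the term at $u=z$ produces the expected factor $(1-z)^{\alpha+n+1}(1+z)^{\beta+n+1}P_{\gamma-n-1}^{(\alpha+n+1,\beta+n+1)}(z)$. Collecting constants via $(-\gamma)_n(-\gamma+n)=(-\gamma)_{n+1}$ delivers the target prefactor $(-1)^{n+1}/(2^{n+1}(-\gamma)_{n+1})$, closing the induction.

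\textbf{Main obstacle.} Nothing is analytically deep: the whole argument reduces to \eqref{der9} plus a careful induction. The step requiring the most care is the combinatorics of signs and Pochhammer shifts, in particular the interplay $(\gamma+1)_n\leftrightarrow(-1)^n(-\gamma)_n$ under the index reparametrization, the chain-rule sign $(-1/2)^n$ from $w=(1-z)/2$, and the orientation-reversing sign picked up when integrating from $z$ up to $1$ at each level of the induction.
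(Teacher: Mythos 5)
Your proposal is correct and follows essentially the same route as the paper: both invert the derivative identity \eqref{derJac1} of Theorem \ref{mderJacF1K-1} in its $n=1$ form, use $\Re\alpha,\Re\beta>-1$ to kill the boundary term at $w=1$, and iterate, with your version merely spelling out the induction and the Pochhammer bookkeeping $(\gamma-n+1)_n=(-1)^n(-\gamma)_n$ more explicitly.
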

}
{
\begin{proof}
Considering the $n=1$ case of \eqref{derJac1} and then 
integrating produces the following definite integral
\[
\int_{z}^1(1-w)^\alpha(1+w)^\beta 
P_\gamma^{(\alpha,\beta)}(w)\,\dd w=
\frac{1}{2\gamma}(1-z)^{\alpha+1}(1+z)^{\beta+1}
P_{\gamma-1}^{(\alpha+1,\beta+1)}(z),
\]
Iterating the above expression $n$-times completes the proof.
\end{proof}
}

\noindent 
The Jacobi function of the first kind
obeys the following multi-derivative identity.
\begin{thm}\label{mderJacF1K-1}Let $n\in\N_0$, $\alpha,\beta,\gamma\in\C$, $z\in\C\setminus(-\infty,-1]$. Then
\begin{equation}
\label{derJac1}
\frac{{\mathrm d}^n}{{\mathrm d} z^n}
(1-z)^\alpha(1+z)^\beta P_\gamma^{(\alpha,\beta)}(z)=
(-2)^n (\gamma+1)_n (1-z)^{\alpha-n} (1+z)^{\beta-n}
P_{\gamma+n}^{(\alpha-n,\beta-n)}(z).
\end{equation}
\end{thm}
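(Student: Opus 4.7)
The plan is to prove the identity directly from the single Gauss hypergeometric representation of $P_\gamma^{(\alpha,\beta)}$ given in \eqref{Jac1}, by matching it to the differentiation formula \eqref{der9}. First I would substitute $w=(1-z)/2$, so that $(1-z)^\alpha(1+z)^\beta = 2^{\alpha+\beta}w^\alpha(1-w)^\beta$, and write
\[
(1-z)^\alpha(1+z)^\beta P_\gamma^{(\alpha,\beta)}(z)
= \frac{2^{\alpha+\beta}\Gamma(\alpha+\gamma+1)}{\Gamma(\gamma+1)}
\, w^{c-1}(1-w)^{a+b-c}\,\Ohyp{2}{1}{a,b}{c}{w},
\]
where the parameters are chosen as $a=-\gamma$, $b=\alpha+\beta+\gamma+1$, $c=\alpha+1$. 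A quick check shows $c-1=\alpha$ and $a+b-c=\beta$, so the exponents line up precisely with those required by \eqref{der9}.

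Next I would apply \eqref{der9}, together with the chain rule $\dd/\dd z=-\tfrac12\,\dd/\dd w$, which contributes a factor of $(-1)^n/2^n$ when iterated $n$ times. The result is
\[
\frac{\dd^n}{\dd z^n}(1-z)^\alpha(1+z)^\beta P_\gamma^{(\alpha,\beta)}(z)
=\frac{(-1)^n 2^{\alpha+\beta}}{2^n}\,\frac{\Gamma(\alpha+\gamma+1)}{\Gamma(\gamma+1)}\,
w^{\alpha-n}(1-w)^{\beta-n}\,\Ohyp{2}{1}{-\gamma-n,\alpha+\beta+\gamma+1-n}{\alpha+1-n}{w}.
\]
Converting back via $w^{\alpha-n}(1-w)^{\beta-n}=2^{-(\alpha+\beta-2n)}(1-z)^{\alpha-n}(1+z)^{\beta-n}$ collapses the prefactor to exactly $(-2)^n$.

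To finish, I would identify the surviving Olver hypergeometric function with $P_{\gamma+n}^{(\alpha-n,\beta-n)}(z)$. Applying \eqref{Jac1} with the parameter shift $(\gamma,\alpha,\beta)\mapsto(\gamma+n,\alpha-n,\beta-n)$ yields
\[
P_{\gamma+n}^{(\alpha-n,\beta-n)}(z)=\frac{\Gamma(\alpha+\gamma+1)}{\Gamma(\gamma+n+1)}\,
\Ohyp{2}{1}{-\gamma-n,\alpha+\beta+\gamma+1-n}{\alpha+1-n}{\tfrac{1-z}{2}},
\]
and the Pochhammer factor $(\gamma+1)_n=\Gamma(\gamma+n+1)/\Gamma(\gamma+1)$ appears from the ratio of the gamma-function prefactors. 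Assembling all pieces gives the claimed formula \eqref{derJac1}.

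The main obstacle is purely bookkeeping rather than conceptual: one must keep careful track of the three independent sources of powers of $2$ (the prefactor $2^{\alpha+\beta}$ coming from the substitution in $(1-z)^\alpha(1+z)^\beta$, the $2^{-n}$ from the chain rule, and the $2^{-(\alpha+\beta-2n)}$ reintroduced when translating back to the variable $z$), and verify that they combine to produce precisely $(-2)^n$, as well as check the matching of the gamma ratios so that the Pochhammer $(\gamma+1)_n$ emerges cleanly. No analytic subtleties arise since \eqref{der9} is valid for all admissible complex parameters and $n\in\N_0$, and the domain restriction $z\in\C\setminus(-\infty,-1]$ ensures the branches of $(1-z)^\alpha$ and $(1+z)^\beta$ are well defined.
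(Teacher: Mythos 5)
Your proposal is correct and follows essentially the same route as the paper: both apply the derivative relation \eqref{der9} to the representation \eqref{Jac1} via the substitution $w=(1-z)/2$ (which is exactly the identity \eqref{effimp} in the paper) and then reidentify the resulting Olver hypergeometric function as $P_{\gamma+n}^{(\alpha-n,\beta-n)}(z)$, with the gamma ratio producing $(\gamma+1)_n$. The powers of $2$ and the sign indeed combine to $(-2)^n$ as you claim, so the bookkeeping checks out.
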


\begin{proof}
Start with
\eqref{der9} and
in \eqref{Jac1}, one has  therefore
\begin{equation}
w=\frac{1-z}{2},\quad \frac{\dd}{\dd w}=-2\frac{\dd}{\dd z},
\label{shift2}
\end{equation}
and then we derive
\begin{eqnarray}
&&\hspace{-1.4cm}\frac{\dd^n}{\dd z^n}(1-z)^{c-1}(1+z)^{a+b-c}
\Ohyp21{a,b}{c}{\frac{1-z}{2}}\nonumber\\
&&\hspace{1cm}=(-2)^n(1-z)^{c-n-1}(1+z)^{a+b-c-n}
\Ohyp21{a\!-\!n,b\!-\!n}{c\!-\!n}{\frac{1-z}{2}}.
\label{effimp}
\end{eqnarray}
Substituting the Gauss hypergeometric function in \eqref{Jac1} 
into the above expression completes the proof.
\end{proof}

\begin{thm}\label{mintJacF1K-1-inf}Let $n\in\N_0$, $\alpha,\beta,\gamma\in\C$, such that $\alpha+\gamma\not \in -\mathbb N$, $\Re(\alpha+\beta+\gamma)<-n$, $\Re \gamma>n-1$,
$z\in\C\setminus(-\infty,-1]$.
Then
\begin{equation}\label{mijf1k:zTOinf}
\int_{z}^\infty \cdots\int_{z}^\infty (1-w)^\alpha(1+w)^\beta
P_\gamma^{(\alpha,\beta)}(w) \,({\mathrm d}w)^n
=\frac{1}{2^n(-\gamma)_n}(1-z)^{\alpha+n}(1+z)^{\beta+n} 
P_{\gamma-n}^{(\alpha+n,\beta+n)}(z).
\end{equation}
\end{thm}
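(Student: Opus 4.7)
The proof will proceed as the natural infinite-endpoint analogue of Theorem~\ref{mintJacF1K-1}: the engine is again the multi-derivative identity in Theorem~\ref{mderJacF1K-1}, only the justification for discarding boundary terms now has to take place at $w=\infty$ rather than at $w=1$.

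Concretely, for the base case $n=1$ I would apply \eqref{derJac1} with the parameter shift $(\alpha,\beta,\gamma)\mapsto(\alpha+1,\beta+1,\gamma-1)$ to obtain
\[
\frac{\dd}{\dd w}\bigl[(1-w)^{\alpha+1}(1+w)^{\beta+1}P_{\gamma-1}^{(\alpha+1,\beta+1)}(w)\bigr]=-2\gamma\,(1-w)^\alpha(1+w)^\beta P_\gamma^{(\alpha,\beta)}(w),
\]
and integrate both sides from $z$ to $\infty$ by the fundamental theorem of calculus. To dispose of the boundary at $w=\infty$, I would read off from representation \eqref{Jac1} the two-term large-argument expansion $P_\gamma^{(\alpha,\beta)}(w)=O(w^\gamma)+O(w^{-\alpha-\beta-\gamma-1})$ as $|w|\to\infty$. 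Substituting this in, the bracketed quantity $(1-w)^{\alpha+1}(1+w)^{\beta+1}P_{\gamma-1}^{(\alpha+1,\beta+1)}(w)$ behaves like a linear combination of $w^{\alpha+\beta+\gamma+1}$ and $w^{-\gamma}$, and both terms vanish at infinity precisely under the single-step conditions $\Re(\alpha+\beta+\gamma)<-1$ and $\Re\gamma>0$.

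Iterating the construction $n$ times, at the $k$-th step ($1\le k\le n$) I would re-apply the derivative identity with $(\alpha,\beta,\gamma)$ replaced by $(\alpha+k,\beta+k,\gamma-k)$. The boundary quantity to be killed at infinity then becomes $(1-w)^{\alpha+k}(1+w)^{\beta+k}P_{\gamma-k}^{(\alpha+k,\beta+k)}(w)$, with asymptotic exponents $\alpha+\beta+\gamma+k$ and $-\gamma+k-1$; these vanish under $\Re(\alpha+\beta+\gamma)<-k$ and $\Re\gamma>k-1$, whose strongest instances (at $k=n$) are exactly the hypotheses in the statement. Collecting the constants $-2\gamma,-2(\gamma-1),\ldots,-2(\gamma-n+1)$ accumulated in the $n$ single-step inversions and simplifying by means of the Pochhammer identity $(\gamma-n+1)_n=(-1)^n(-\gamma)_n$ yields the prefactor $1/(2^n(-\gamma)_n)$ that appears in~\eqref{mijf1k:zTOinf}.

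The one substantive obstacle, in contrast with the proof of Theorem~\ref{mintJacF1K-1}, is the asymptotic control at $\infty$: since $P_\gamma^{(\alpha,\beta)}$ carries two distinct power-type branches at infinity rather than a single regular boundary value, \emph{both} must be suppressed simultaneously, which is what forces the two-sided pair of hypotheses $\Re(\alpha+\beta+\gamma)<-n$ and $\Re\gamma>n-1$, as opposed to the purely one-sided integrability condition $\Re\alpha,\Re\beta>-1$ that sufficed in the finite-endpoint version. Once this asymptotic bookkeeping is in place, the rest of the argument is a routine iteration of the single-step identity, exactly parallel to Theorem~\ref{mintJacF1K-1}.
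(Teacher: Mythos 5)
Your proposal follows exactly the route of the paper's proof: apply the $n=1$ case of \eqref{derJac1} with shifted parameters, integrate from $z$ to $\infty$, kill the boundary term at infinity, and iterate. The asymptotic bookkeeping you supply --- reading off the two branches $O(w^{\gamma})$ and $O(w^{-\alpha-\beta-\gamma-1})$ of $P_\gamma^{(\alpha,\beta)}(w)$ from \eqref{Jac1} and checking that the boundary quantity at step $k$ decays precisely when $\Re(\alpha+\beta+\gamma)<-k$ and $\Re\gamma>k-1$, with $k=n$ the binding case --- is exactly the content hidden behind the paper's phrase ``taking into account the constraints considered in the statement,'' and is a genuine improvement in explicitness over the printed proof.

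The one thing to fix is the sign in your final accounting. From your (correct) base identity $\frac{\dd}{\dd w}F(w)=-2\gamma\,(1-w)^{\alpha}(1+w)^{\beta}P_\gamma^{(\alpha,\beta)}(w)$ with $F(w)=(1-w)^{\alpha+1}(1+w)^{\beta+1}P_{\gamma-1}^{(\alpha+1,\beta+1)}(w)$, the fundamental theorem of calculus gives $F(\infty)-F(z)=-F(z)$ on the left, so the single step reads $\int_z^\infty(\cdots)\,\dd w=\tfrac{1}{2\gamma}F(z)$, with a plus sign, exactly as displayed in the paper's own proof. Each of the $n$ boundary evaluations therefore contributes an extra factor $-1$ beyond the naive reciprocal of $-2(\gamma-k)$, and the accumulated prefactor is $\frac{1}{2^n(\gamma-n+1)_n}=\frac{(-1)^n}{2^n(-\gamma)_n}$, i.e.\ the same constant as in the finite-endpoint Theorem \ref{mintJacF1K-1}, not $\frac{1}{2^n(-\gamma)_n}$. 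Your ``collecting constants'' step drops this $(-1)^n$: the identity $(\gamma-n+1)_n=(-1)^n(-\gamma)_n$ that you invoke turns $1/(2^n(\gamma-n+1)_n)$ into $(-1)^n/(2^n(-\gamma)_n)$, not into $1/(2^n(-\gamma)_n)$. You do land on the constant printed in the statement of Theorem \ref{mintJacF1K-1-inf}, but that constant is itself inconsistent with the single-step formula displayed in the paper's proof (for $n=1$ it would give $-1/(2\gamma)$ rather than $1/(2\gamma)$); the discrepancy lies in the theorem as stated, and your derivation should correct it rather than reproduce it via an offsetting slip.
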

\begin{proof}
Considering the $n=1$ case of \eqref{derJac1},
taking into account the constraints considered in the statement 
and then integrating produces the following improper integral:
\[
\int_{z}^\infty (1-w)^\alpha(1+w)^\beta P_\gamma^{(\alpha,\beta)}(w)\,\dd w=
\frac{1}{2\gamma}(1-z)^{\alpha+1}(1+z)^{\beta+1}
P_{\gamma-1}^{(\alpha+1,\beta+1)}(z),
\]
Iterating the above expression $n$-times completes the proof.
\end{proof}

{
\begin{thm}\label{mderJacF1K-2}Let $n\in\N_0$, $\alpha,\beta,\gamma\in\C$,
$z\in\C\setminus(-\infty,-1]$. 
Then
\begin{equation}
\frac{{\mathrm d}^n}{{\mathrm d} z^n}
(1-z)^\alpha P_\gamma^{(\alpha,\beta)}(z)=
 (-\alpha-\gamma)_n (1-z)^{\alpha-n} 
P_{\gamma}^{(\alpha-n,\beta+n)}(z).
\label{derJac2}
\end{equation}
\end{thm}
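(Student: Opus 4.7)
The plan is to mimic the proof of Theorem \ref{mderJacF1K-1}, which rested on the Gauss hypergeometric differentiation formula \eqref{der9}. Here the power $(1+z)^\beta$ is absent on the left-hand side, which suggests that the right differentiation formula to invoke is \eqref{der4}, namely
\[
\frac{\dd^n}{\dd w^n} w^{c-1}\Ohyp{2}{1}{a,b}{c}{w}=w^{c-n-1}\Ohyp{2}{1}{a,b}{c-n}{w}.
\]
This isolates exactly the structure $w^{c-1}$ in front of the hypergeometric function, with no $(1-w)$-factor, which matches the prefactor $(1-z)^\alpha$ when combined with the Gauss hypergeometric representation \eqref{Jac1}.

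First I would express $(1-z)^\alpha P_\gamma^{(\alpha,\beta)}(z)$ by inserting \eqref{Jac1}; with the substitution $w=(1-z)/2$ from \eqref{shift2}, one has $(1-z)^\alpha = 2^\alpha w^\alpha = 2^\alpha w^{c-1}$ where $c=\alpha+1$, and the parameters $a=-\gamma$, $b=\alpha+\beta+\gamma+1$ are independent of $\alpha$ in the sense relevant for the map $\alpha\mapsto\alpha-n$, since $a+b=-\gamma+\alpha+\beta+\gamma+1=\alpha+\beta+1$ shifts consistently with $(\alpha,\beta)\mapsto(\alpha-n,\beta+n)$. Next I apply \eqref{der4} with $c=\alpha+1$, accounting for the chain rule $\dd/\dd z=-\tfrac12\,\dd/\dd w$, which produces an overall factor of $(-1/2)^n$, and collecting the powers of $2$ with $2^\alpha$ and the $(1/2)^{\alpha-n}$ from $w^{\alpha-n}\mapsto (1-z)^{\alpha-n}$, the powers of $2$ cancel cleanly leaving $(-1)^n(1-z)^{\alpha-n}$.

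The resulting hypergeometric, $\Ohyp{2}{1}{-\gamma,\alpha+\beta+\gamma+1}{\alpha-n+1}{(1-z)/2}$, is precisely the one appearing in the representation \eqref{Jac1} for $P_\gamma^{(\alpha-n,\beta+n)}(z)$, so I would use \eqref{Jac1} backwards to recognize it as $\Gamma(\gamma+1)/\Gamma(\alpha-n+\gamma+1)\cdot P_\gamma^{(\alpha-n,\beta+n)}(z)$. Combining with the prefactor $\Gamma(\alpha+\gamma+1)/\Gamma(\gamma+1)$ from \eqref{Jac1}, the $\Gamma(\gamma+1)$ cancels and I obtain a ratio $\Gamma(\alpha+\gamma+1)/\Gamma(\alpha-n+\gamma+1)$, together with the sign $(-1)^n$.

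The only real technicality is identifying $(-1)^n\Gamma(\alpha+\gamma+1)/\Gamma(\alpha-n+\gamma+1)=(-\alpha-\gamma)_n$; this is a direct application of \eqref{Poch1} with $a=\alpha+\gamma+1$, which gives $(1-a)_n=(-\alpha-\gamma)_n=(-1)^n\Gamma(a)/\Gamma(a-n)$. I do not expect any genuine obstacle: once the right differentiation formula \eqref{der4} is chosen, the algebra of powers of $2$ and the gamma-function bookkeeping fall into place in a few lines, in complete parallel to the proof of Theorem \ref{mderJacF1K-1}.
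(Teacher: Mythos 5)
Your proof is correct, and every computational claim in it checks out: with $w=(1-z)/2$ and $c=\alpha+1$ the prefactor $(1-z)^\alpha=2^\alpha w^{c-1}$ matches \eqref{der4} exactly, the upper parameters $-\gamma$ and $\alpha+\beta+\gamma+1$ of \eqref{Jac1} are indeed invariant under $(\alpha,\beta)\mapsto(\alpha-n,\beta+n)$, the powers of $2$ cancel to leave $(-1)^n$, and $(-1)^n\Gamma(\alpha+\gamma+1)/\Gamma(\alpha+\gamma+1-n)=(-\alpha-\gamma)_n$ by \eqref{Poch1}. The route is slightly different from the paper's, though in the same spirit: the paper pairs the derivative identity \eqref{der9} (in the form \eqref{effimp}, already established for Theorem \ref{mderJacF1K-1}) with the representation \eqref{Jac2}, where $a=-\beta-\gamma$, $b=\alpha+\gamma+1$, $c=\alpha+1$ give $a+b-c=-\beta$, so the factor $(1+z)^{-\beta}$ is absorbed into the $(2/(z+1))^\beta$ prefactor of \eqref{Jac2} and all three parameters shift by $n$. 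You instead pair \eqref{der4} with \eqref{Jac1}, which keeps the upper parameters fixed and shifts only $c$. Your choice is arguably the more natural one for this particular statement, since \eqref{der4} is precisely the DLMF formula with a bare $w^{c-1}$ prefactor and no $(1-w)$ power; the paper's choice has the advantage of reusing the single workhorse identity \eqref{effimp} across several theorems. The amount of gamma-function and power-of-two bookkeeping is essentially identical either way, and both land on the same Pochhammer symbol via \eqref{Poch1}.
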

}
{
\begin{proof}
Adopting \eqref{effimp} and utilizing 
the Gauss hypergeometric representation 
\eqref{Jac2} completes the proof.
\end{proof}
}

{
\begin{thm}\label{mintJacF1K-2}Let $n\in\N_0$, $\alpha,\beta,\gamma\in\C$, with $\Re \alpha>-1$,
$z\in\C\setminus(-\infty,-1]$. 
Then
\begin{equation}
\int_{z}^1\cdots\int_{z}^1 (1-w)^\alpha
P_\gamma^{(\alpha,\beta)}(w) \,({\mathrm d}w)^n
=\frac{(1-z)^{\alpha+n}}{(\alpha+\gamma+1)_n} 
P_{\gamma}^{(\alpha+n,\beta-n)}(z).
\end{equation}
\end{thm}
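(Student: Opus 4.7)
The plan is to mirror the argument used for Theorem \ref{mintJacF1K-1}: exploit the single-derivative form of Theorem \ref{mderJacF1K-2} and integrate $n$ times. The key observation is that the derivative identity \eqref{derJac2} can be rewritten so that $(1-w)^\alpha P_\gamma^{(\alpha,\beta)}(w)$ itself appears as an exact derivative of a shifted quantity.

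Concretely, I would take the $n=1$ case of \eqref{derJac2} and then apply the parameter shift $(\alpha,\beta)\mapsto(\alpha+1,\beta-1)$ to obtain
\[
\frac{\dd}{\dd w}(1-w)^{\alpha+1}P_\gamma^{(\alpha+1,\beta-1)}(w)
=-(\alpha+\gamma+1)(1-w)^\alpha P_\gamma^{(\alpha,\beta)}(w),
\]
so that the integrand of interest is recognized as an exact derivative up to the constant $-(\alpha+\gamma+1)$. Integrating from $z$ to $1$ and noting that the boundary term at $w=1$ vanishes (since $\Re\alpha>-1$ forces $(1-w)^{\alpha+1}\to 0$, while $P_\gamma^{(\alpha+1,\beta-1)}(1)$ is finite by \eqref{Jacone}) gives the $n=1$ case
\[
\int_z^1 (1-w)^\alpha P_\gamma^{(\alpha,\beta)}(w)\,\dd w
=\frac{(1-z)^{\alpha+1}}{\alpha+\gamma+1}P_\gamma^{(\alpha+1,\beta-1)}(z).
\]

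From here I would iterate $n$ times. At the $k$th iteration the relevant parameter is $\alpha+k-1$, whose real part is still $>-1$, so all boundary terms at $w=1$ continue to vanish. The denominators $\alpha+\gamma+1,\alpha+\gamma+2,\ldots,\alpha+\gamma+n$ accumulate and telescope into the Pochhammer symbol $(\alpha+\gamma+1)_n$, while the parameters of the Jacobi function shift by $(+n,-n)$, yielding exactly the claimed expression. Alternatively, one may directly invoke Lemma \ref{lem:1.1} with $f^{\vec\mu}(w)=(1-w)^{\alpha}P_\gamma^{(\alpha,\beta)}(w)$, $\vec\mu=(\alpha,\beta)$, and $\lambda_{\vec\mu}=-(\alpha+\gamma)$ with the sign convention $\vec1=(1,-1)$, but the direct iteration is cleaner.

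The main potential obstacle is the convergence of the boundary terms during repeated integration, but this is controlled uniformly by the single hypothesis $\Re\alpha>-1$, since every subsequent integrand carries a factor $(1-w)^{\alpha+k}$ with $\Re(\alpha+k)\ge \Re\alpha>-1$ for $k\in\N_0$. Hence the iteration proceeds without additional hypotheses, and the proof concludes after $n$ steps.
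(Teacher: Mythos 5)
Your proposal is correct and follows essentially the same route as the paper: the paper's proof also takes the $n=1$ case of \eqref{derJac2}, integrates it to get exactly the definite integral you display (with the shift $(\alpha,\beta)\mapsto(\alpha+1,\beta-1)$ implicit), and then iterates $n$ times. You have merely made explicit the vanishing of the boundary term at $w=1$ under $\Re\alpha>-1$ and the telescoping into $(\alpha+\gamma+1)_n$, which the paper leaves to the reader.
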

}
{
\begin{proof}
Considering the $n=1$ case of \eqref{derJac2} and then 
integrating produces the following definite integral
\[
\int_{z}^1(1-w)^\alpha P_\gamma^{(\alpha,\beta)}(w)\,\dd w=
\frac{(1-z)^{\alpha+1}}{\alpha+\gamma+1}
P_{\gamma}^{(\alpha+1,\beta-1)}(z).
\]
Iterating the above expression $n$-times completes the proof.
\end{proof}
}
{
{
\begin{thm}\label{mintJacF1K-2-inf}Let $n\in\N_0$, $\alpha,\beta,\gamma\in\C$, with $\Re(\alpha+\gamma)<-n$,
$\Re(\beta+\gamma)>n-1$, $z\in\C\setminus(-\infty,-1]$. 
Then
\begin{equation} \label{mijf1k:zTOinf-2}
\int_{z}^\infty\cdots\int_{z}^\infty (1-w)^\alpha
P_\gamma^{(\alpha,\beta)}(w) \,({\mathrm d}w)^n
=\frac{(1-z)^{\alpha+n}}{(\alpha+\gamma+1)_n} 
P_{\gamma}^{(\alpha+n,\beta-n)}(z).
\end{equation}
\end{thm}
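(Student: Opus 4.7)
The plan is to mirror the proof of Theorem~\ref{mintJacF1K-2}, replacing the definite integration from $z$ to $1$ by an improper integration to $\infty$. Starting from the $n=1$ case of Theorem~\ref{mderJacF1K-2},
\[
\frac{\dd}{\dd w}(1-w)^\alpha P_\gamma^{(\alpha,\beta)}(w)=(-\alpha-\gamma)(1-w)^{\alpha-1}P_\gamma^{(\alpha-1,\beta+1)}(w),
\]
I would integrate both sides from $w=z$ to $w=\infty$. Assuming the boundary term at infinity vanishes, the fundamental theorem of calculus gives
\[
\int_z^\infty (1-w)^{\alpha-1}P_\gamma^{(\alpha-1,\beta+1)}(w)\,\dd w=\frac{(1-z)^\alpha P_\gamma^{(\alpha,\beta)}(z)}{\alpha+\gamma},
\]
and relabelling $(\alpha,\beta)\mapsto(\alpha+1,\beta-1)$ produces the $n=1$ instance of \eqref{mijf1k:zTOinf-2}. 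Iterating this single-integral identity $n$ times then yields the stated formula, with the factor $(\alpha+\gamma+1)_n$ arising as the telescoping product $\prod_{k=0}^{n-1}(\alpha+\gamma+1+k)$.

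The heart of the argument is the justification of the vanishing boundary term and the convergence of each improper integral in the iteration, which is precisely where the hypotheses $\Re(\alpha+\gamma)<-n$ and $\Re(\beta+\gamma)>n-1$ enter. From the Gauss hypergeometric representations \eqref{Jac3} and \eqref{Jac4} together with the standard connection formulas at $w=\infty$, one sees that the Jacobi function of the first kind admits the two possible power behaviors
\[
P_\gamma^{(\alpha,\beta)}(w)=O\bigl(w^\gamma\bigr)+O\bigl(w^{-\alpha-\beta-\gamma-1}\bigr)\qquad\text{as }|w|\to\infty,
\]
so that after $k$ applications of the single-integral identity the integrand $(1-w)^{\alpha+k}P_\gamma^{(\alpha+k,\beta-k)}(w)$ decays like $w^{\alpha+\gamma+k}$ or $w^{-\beta-\gamma+k-1}$, respectively. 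Convergence of the $(k+1)$-st integral and vanishing of its boundary term require $\Re(\alpha+\gamma+k)<-1$ and $\Re(\beta+\gamma-k)>0$; letting $k$ run through $0,\ldots,n-1$ reproduces exactly the two stated inequalities.

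The main obstacle I anticipate is pinning down the asymptotic expansion of $P_\gamma^{(\alpha,\beta)}$ at infinity cleanly enough along the (possibly complex) integration path to rigorously discard the boundary contribution at every iteration step; once that estimate is secured, the telescoping of the Pochhammer denominator is a routine induction on $n$.
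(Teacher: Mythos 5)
Your proposal is correct and follows essentially the same route as the paper: integrate the $n=1$ case of \eqref{derJac2} from $z$ to $\infty$, discard the boundary term at infinity, relabel parameters, and iterate $n$ times to telescope the Pochhammer factor. Your asymptotic analysis of the two characteristic behaviors $w^{\gamma}$ and $w^{-\alpha-\beta-\gamma-1}$ at infinity, which pins down exactly how the hypotheses $\Re(\alpha+\gamma)<-n$ and $\Re(\beta+\gamma)>n-1$ arise over the $n$ iterations, is in fact more explicit than the paper's proof, which only remarks that the constraints are "taken into account."
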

}
{
\begin{proof}
Considering the $n=1$ case of \eqref{derJac2},
taking into account the constraints considered in the statement 
and then integrating produces the following improper integral:
\[
\int_{z}^\infty(1-w)^\alpha P_\gamma^{(\alpha,\beta)}(w)\,\dd w=
\frac{(1-z)^{\alpha+1}}{\alpha+\gamma+1}
P_{\gamma}^{(\alpha+1,\beta-1)}(z).
\]
Iterating the above expression $n$-times completes the proof.
\end{proof}
}
}
{
\begin{thm}\label{mderJacF1K-2b}
Let $n\in\N_0$, $\alpha,\beta,\gamma\in\C$,
$z\in\C\setminus(-\infty,-1]$. Then
\begin{equation}
\frac{{\mathrm d}^n}{{\mathrm d} z^n}
(1+z)^\beta P_\gamma^{(\alpha,\beta)}(z)=
(-1)^n (-\beta-\gamma)_n (1+z)^{\beta-n} 
P_{\gamma}^{(\alpha+n,\beta-n)}(z).
\label{derJac2b}
\end{equation}
\end{thm}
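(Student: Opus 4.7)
The plan is to mimic the proof of Theorem~\ref{mderJacF1K-2}, but with the role of the factor $(1-z)$ swapped for that of $(1+z)$. Concretely, this means I would replace the derivative identity \eqref{der9} used there (whose left-hand side carries both a $w^{c-1}$ and a $(1-w)^{a+b-c}$ factor) by \eqref{der6} (which carries only the $(1-w)^{a+b-c}$ factor), and I would pair it with representation \eqref{Jac1} rather than \eqref{Jac2}, since \eqref{Jac1} has no $(z+1)$ prefactor standing in front of its Gauss hypergeometric function.

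Proceeding concretely, I would first derive the $(1+z)$--analog of \eqref{effimp}. Substituting $w=(1-z)/2$ into \eqref{der6}, so that $1-w=(1+z)/2$ and $\dd/\dd w=-2\,\dd/\dd z$, and cancelling the common powers of $2$, gives
\begin{equation*}
\frac{\dd^n}{\dd z^n}(1+z)^{a+b-c}\Ohyp21{a,b}{c}{\frac{1-z}{2}}
= (-1)^n (c-a)_n (c-b)_n (1+z)^{a+b-c-n}\Ohyp21{a,b}{c+n}{\frac{1-z}{2}}.
\end{equation*}
I would then specialize to the parameters read off from \eqref{Jac1}, namely $a=-\gamma$, $b=\alpha+\beta+\gamma+1$, $c=\alpha+1$, so that $a+b-c=\beta$, $c-a=\alpha+\gamma+1$, and $c-b=-\beta-\gamma$. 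Multiplying both sides by $\Gamma(\alpha+\gamma+1)/\Gamma(\gamma+1)$ yields $(1+z)^\beta P_\gamma^{(\alpha,\beta)}(z)$ on the left-hand side, via the definition \eqref{Jac1}.

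On the right-hand side, I would absorb the Pochhammer factor $(\alpha+\gamma+1)_n$ into the gamma function using $\Gamma(\alpha+\gamma+1)(\alpha+\gamma+1)_n=\Gamma(\alpha+n+\gamma+1)$, and observe that the top parameters $(-\gamma,\alpha+\beta+\gamma+1)$ are invariant under the joint shift $\alpha\mapsto\alpha+n$, $\beta\mapsto\beta-n$, while the bottom parameter becomes $\alpha+n+1$. This is precisely what is needed to reidentify the remaining hypergeometric factor, via \eqref{Jac1} with the shifted parameters, as $P_\gamma^{(\alpha+n,\beta-n)}(z)$, leaving the claimed expression $(-1)^n(-\beta-\gamma)_n(1+z)^{\beta-n}P_\gamma^{(\alpha+n,\beta-n)}(z)$. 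The only step requiring any care is the bookkeeping of the powers of $2$ and of $(-1)$ that arise in the change of variables from $w$ to $z$; everything else is a routine reindexing.
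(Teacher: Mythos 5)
Your proof is correct: the change of variables $w=(1-z)/2$ applied to \eqref{der6} gives exactly the displayed identity, the parameter identification $a=-\gamma$, $b=\alpha+\beta+\gamma+1$, $c=\alpha+1$ yields $a+b-c=\beta$, $c-a=\alpha+\gamma+1$, $c-b=-\beta-\gamma$, and absorbing $(\alpha+\gamma+1)_n$ into $\Gamma(\alpha+\gamma+1)$ reidentifies the right-hand side as $(-1)^n(-\beta-\gamma)_n(1+z)^{\beta-n}P_\gamma^{(\alpha+n,\beta-n)}(z)$ via \eqref{Jac1} with $(\alpha,\beta)\mapsto(\alpha+n,\beta-n)$. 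The strategy is the same as the paper's (apply one of the Gauss hypergeometric derivative relations to one of the four representations and reidentify), but you chose a different pairing: the paper cites \eqref{effimp} (i.e.\ \eqref{der9}) together with \eqref{Jac2}, whereas you pair \eqref{der6} with \eqref{Jac1}. In fact the paper's citation is arguably imprecise for this particular theorem, since $(1+z)^\beta$ times \eqref{Jac2} carries no algebraic prefactor at all and the natural companion there is \eqref{der2} rather than \eqref{der9}; your explicit pairing avoids that ambiguity and the bookkeeping of the powers of $2$ and $(-1)$ works out as you describe. The only implicit assumption is $\alpha+\gamma\notin-\N$ (needed to write $\Gamma(\alpha+\gamma+1)(\alpha+\gamma+1)_n=\Gamma(\alpha+\gamma+n+1)$ and to invoke \eqref{Jac1}), which is the standing hypothesis of Theorem \ref{Firstthm} even though it is not restated in Theorem \ref{mderJacF1K-2b}.
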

}
{
\begin{proof}
Adopting \eqref{effimp} and utilizing 
the Gauss hypergeometric representation 
\eqref{Jac2} completes the proof.
\end{proof}
}

{
{
\begin{thm}\label{mintJacF1K-2b-inf}Let $n\in\N_0$, $\alpha,\beta,\gamma\in\C$, with $\Re(\beta+\gamma)<-n$,
$\Re(\alpha+\gamma)>n-1$, $z\in\C\setminus(-\infty,-1]$. 
Then
\begin{equation} \label{mijf1k:zTOinf-2b}
\int_{z}^\infty\cdots\int_{z}^\infty (1+w)^\beta
P_\gamma^{(\alpha,\beta)}(w) \,({\mathrm d}w)^n
=\frac{(-1)^n(1+z)^{\beta+n}}{(\beta+\gamma+1)_n} 
P_{\gamma}^{(\alpha-n,\beta+n)}(z).
\end{equation}
\end{thm}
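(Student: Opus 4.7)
The plan is to follow exactly the template employed for the companion Theorems \ref{mintJacF1K-1-inf} and \ref{mintJacF1K-2-inf}: establish the $n=1$ case from a first-order derivative identity and then iterate $n$ times. The needed derivative identity is furnished by Theorem \ref{mderJacF1K-2b}, which, after a reindexing of the parameters, expresses the target integrand as an exact derivative.

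Concretely, I would begin by specializing \eqref{derJac2b} to $n=1$ and applying the parameter shift $(\alpha,\beta)\mapsto(\alpha-1,\beta+1)$. This produces
\[
(1+w)^\beta P_\gamma^{(\alpha,\beta)}(w)
=\frac{1}{\beta+\gamma+1}\,\frac{\dd}{\dd w}\!\left[(1+w)^{\beta+1}P_\gamma^{(\alpha-1,\beta+1)}(w)\right].
\]
Integrating both sides over $[z,\infty)$ and invoking the hypotheses $\Re(\beta+\gamma)<-n$ together with $\Re(\alpha+\gamma)>n-1$ to kill the upper boundary contribution yields
\[
\int_z^\infty (1+w)^\beta P_\gamma^{(\alpha,\beta)}(w)\,\dd w
=\frac{-(1+z)^{\beta+1}}{\beta+\gamma+1}\,P_\gamma^{(\alpha-1,\beta+1)}(z),
\]
which is the $n=1$ case of \eqref{mijf1k:zTOinf-2b}.

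Next I would iterate the same single-integral identity $n$ times: at the $k$-th stage (for $k=0,1,\ldots,n-1$) I apply it with $(\alpha,\beta)$ replaced by $(\alpha-k,\beta+k)$, contributing a factor $-1/(\beta+\gamma+k+1)$. The telescoping product $\prod_{k=0}^{n-1}(\beta+\gamma+k+1)=(\beta+\gamma+1)_n$, together with the accumulated sign $(-1)^n$ and the accumulated weight $(1+z)^{\beta+n}$, reproduces the right-hand side of \eqref{mijf1k:zTOinf-2b}.

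The principal obstacle, and the only analytic point requiring genuine attention, is verifying that the boundary term at $w\to\infty$ indeed vanishes at every stage of the iteration. From the representations \eqref{Jac3} and \eqref{Jac4} one reads off that $P_\gamma^{(\alpha',\beta')}(w)$ is asymptotically a linear combination of $w^\gamma$ and $w^{-\alpha'-\beta'-\gamma-1}$ as $w\to\infty$; after multiplication by $(1+w)^{\beta+k+1}$ (with $k=0,1,\ldots,n-1$) these two contributions decay precisely when $\Re(\beta+\gamma)<-n$ and $\Re(\alpha+\gamma)>n-1$, respectively. Thus the two parameter restrictions stated in the theorem are exactly those needed to legitimise each antidifferentiation, which closes the argument.
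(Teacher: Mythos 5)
Your proposal is correct and follows exactly the paper's own route: the $n=1$ case of \eqref{derJac2b} with the parameter shift $(\alpha,\beta)\mapsto(\alpha-1,\beta+1)$, integration over $[z,\infty)$ with the vanishing boundary term, and $n$-fold iteration producing the sign $(-1)^n$ and the Pochhammer factor $(\beta+\gamma+1)_n$. The only difference is that you spell out the large-$w$ asymptotics justifying the vanishing of the boundary contribution at each stage, a point the paper's proof leaves implicit.
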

}
{
\begin{proof}
Considering the $n=1$ case of \eqref{derJac2b},
taking into account the constraints considered in the statement 
and then integrating produces the following improper integral:
\[
\int_{z}^\infty(1+w)^\beta P_\gamma^{(\alpha,\beta)}(w)\,\dd w=
\frac{-(1+z)^{\beta+1}}{\beta+\gamma+1}
P_{\gamma}^{(\alpha-1,\beta+1)}(z).
\]
Iterating the above expression $n$-times completes the proof.
\end{proof}
}
}
{
\begin{thm}\label{mderJacF1K-3}Let $n\in\N_0$, $\alpha,
\beta,\gamma\in\C$,
$z\in\C\setminus(-\infty,-1]$. Then
\begin{equation}
\frac{{\mathrm d}^n}{{\mathrm d} z^n}
 P_\gamma^{(\alpha,\beta)}(z)=
 2^{-n}(\alpha+\beta+\gamma+1)_n 
P_{\gamma-n}^{(\alpha+n,\beta+n)}(z).
\label{derJac3}
\end{equation}
\end{thm}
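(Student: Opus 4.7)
The plan is to mirror the proofs of the preceding multi-derivative theorems (Theorem \ref{mderJacF1K-1}, Theorem \ref{mderJacF1K-2}, Theorem \ref{mderJacF1K-2b}), but this time using the plain derivative identity \eqref{der2}, with no prefactor in $w$ or $(1-w)$, applied to the first Gauss hypergeometric representation \eqref{Jac1}. Since \eqref{derJac3} has no $(1-z)^\alpha$ or $(1+z)^\beta$ sitting outside the Jacobi function, \eqref{der2} is precisely the right tool.

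First I would set $w=(1-z)/2$, so that $\dd/\dd z=-\tfrac{1}{2}\,\dd/\dd w$ and hence $\dd^n/\dd z^n=(-2)^{-n}\,\dd^n/\dd w^n$. Applying \eqref{der2} to the Olver hypergeometric function appearing in \eqref{Jac1} yields
\[
\frac{\dd^n}{\dd z^n}\Ohyp21{-\gamma,\alpha+\beta+\gamma+1}{\alpha+1}{\tfrac{1-z}{2}}
=(-2)^{-n}(-\gamma)_n(\alpha+\beta+\gamma+1)_n\,
\Ohyp21{-(\gamma-n),\alpha+\beta+\gamma+1+n}{\alpha+1+n}{\tfrac{1-z}{2}}.
\]
Multiplying by the prefactor $\Gamma(\alpha+\gamma+1)/\Gamma(\gamma+1)$ from \eqref{Jac1} gives an expression for $\dd^n P_\gamma^{(\alpha,\beta)}(z)/\dd z^n$.

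Next I would recognize the right-hand side as $2^{-n}(\alpha+\beta+\gamma+1)_n P_{\gamma-n}^{(\alpha+n,\beta+n)}(z)$ via the same representation \eqref{Jac1} evaluated at shifted parameters $(\gamma-n,\alpha+n,\beta+n)$; noting that $(\alpha+n)+(\beta+n)+(\gamma-n)+1=\alpha+\beta+\gamma+n+1$, the hypergeometric part matches exactly. The only remaining bookkeeping is to reconcile the scalar prefactors, which reduces to showing
\[
\frac{(-1)^n(-\gamma)_n}{\Gamma(\gamma+1)}=\frac{1}{\Gamma(\gamma-n+1)},
\]
and this is immediate from \eqref{Poch1} with $a=\gamma+1$.

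There is no genuine obstacle here; the work is entirely a substitution and a Pochhammer/gamma bookkeeping step, following the same template already established in the proofs of Theorems \ref{mderJacF1K-2} and \ref{mderJacF1K-2b}. The only place a careless reader might slip is in the sign arising from $(-2)^{-n}$ versus $2^{-n}$, which is absorbed precisely by the $(-1)^n$ coming from \eqref{Poch1}.
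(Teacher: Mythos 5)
Your proposal is correct and follows essentially the same route as the paper, which likewise proves \eqref{derJac3} by applying \eqref{der2} to the representation \eqref{Jac1} under the substitution \eqref{shift2}. Your Pochhammer bookkeeping via \eqref{Poch1}, absorbing the $(-1)^n$ from $(-2)^{-n}(-\gamma)_n$ into $1/\Gamma(\gamma-n+1)$, is exactly the detail the paper leaves implicit, and it checks out.
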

}
{
\begin{proof}
Start with \eqref{der2}
and using \eqref{Jac1} with \eqref{shift2} completes the proof.
\end{proof}
}
\noindent 
{
\begin{thm}\label{mintJacF1K-3}Let $n\in\N_0$, $\alpha,\beta,\gamma\in\C$,
with $\alpha+\gamma\not \in -\mathbb N$, $z\in\C\setminus(-\infty,-1]$. Then
\begin{align}
\int_{z}^{1}\cdots\int_{z}^1 &
P_\gamma^{(\alpha,\beta)}(w) \,({\mathrm d}w)^n
=\frac{2^n}{(-\alpha\!-\!\beta\!-\!\gamma)_n} 
P_{\gamma+n}^{(\alpha-n,\beta-n)}(z)\nonumber\\[0.1cm]
&+\frac{2\Gamma(\alpha+\gamma+1)(1-z)^{n-1}}{(n-1)!(\alpha+\beta+\gamma)\Gamma(\alpha)
\Gamma(\gamma+2)}\hyp32{1\!-\!n,1\!-\!\alpha,1}
{\gamma\!+\!2,1\!-\!\alpha\!-\!\beta\!-\!\gamma}{\frac{2}{1-z}}\\[3mm]
&=\frac{\Gamma(\alpha+\gamma+1)(1-z)^n}{\Gamma(\alpha+1)\Gamma(\gamma+1)n!}
\hyp32{-\gamma,\alpha+\beta+\gamma+1,1}{\alpha+1,n+1}{\frac{1-z}2}.
\end{align}
\end{thm}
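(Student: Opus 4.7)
The plan is to prove the two expressions for the iterated integral separately. The second form (argument $\frac{1-z}{2}$) falls out naturally from the Cauchy formula for repeated integration, whereas the first form (argument $\frac{2}{1-z}$) is obtained by identifying a natural $n$-fold antiderivative via Theorem \ref{mderJacF1K-3} and then subtracting off its Taylor polynomial of degree $n-1$ at $z=1$.

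For the second equality, I would invoke the Cauchy formula
\[
\int_{z}^{1}\!\!\cdots\!\!\int_{z}^{1} f(w)\,(\dd w)^n
=\frac{1}{(n-1)!}\int_{z}^{1}(t-z)^{n-1}f(t)\,\dd t,
\]
substitute the Gauss hypergeometric series \eqref{Jac1} for $P_\gamma^{(\alpha,\beta)}(t)$, and interchange sum and integral. Each summand then collapses to the Beta integral
\[
\int_{z}^{1}(t-z)^{n-1}(1-t)^{k}\,\dd t=\frac{(n-1)!\,k!\,(1-z)^{n+k}}{(n+k)!},
\]
and invoking $(n+k)!=n!\,(n+1)_{k}$ together with $k!=(1)_{k}$ recognises the resulting series as the stated $_3F_2$ at $\frac{1-z}{2}$. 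Convergence of the term-by-term integration holds on a neighbourhood of $z=1$ and the general case follows by analytic continuation.

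For the first equality, I would apply \eqref{derJac3} with the shift $(\gamma,\alpha,\beta)\mapsto(\gamma+n,\alpha-n,\beta-n)$ and the Pochhammer reflection $(\alpha+\beta+\gamma-n+1)_n=(-1)^n(-\alpha-\beta-\gamma)_n$ to obtain
\[
\frac{\dd^n}{\dd z^n}\!\left[\frac{2^n\,P_{\gamma+n}^{(\alpha-n,\beta-n)}(z)}{(-\alpha-\beta-\gamma)_n}\right]
=(-1)^n P_\gamma^{(\alpha,\beta)}(z).
\]
Denoting the iterated integral by $F_n(z)$, one has $F_n^{(n)}(z)=(-1)^n P_\gamma^{(\alpha,\beta)}(z)$ together with $F_n^{(k)}(1)=0$ for $k=0,\ldots,n-1$. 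Hence $F_n$ coincides with the bracketed expression minus its Taylor polynomial of degree $n-1$ about $z=1$. Iterating \eqref{derJac3} and using the special value \eqref{Jacone} yields that Taylor polynomial explicitly as a sum in $(z-1)^{k}$; reindexing $k\mapsto n-1-k$ and applying standard Pochhammer and gamma identities should recast the polynomial correction as $(1-z)^{n-1}$ times the asserted terminating $_3F_2$ at $\frac{2}{1-z}$ with the claimed prefactor.

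The main obstacle is this last algebraic step. Converting the ascending Taylor sum in $(z-1)^{k}$ into a descending $_3F_2$ in $\frac{2}{1-z}$ requires careful bookkeeping with Pochhammer reflections such as $(\alpha+\beta+\gamma-n+1)_{n-1-k}=(\alpha+\beta+\gamma-n+1)_{n-1}/(\alpha+\beta+\gamma-k)_{k}$, the identification of $\Gamma(\alpha-n+k+1)$ in terms of $\Gamma(\alpha)$ and $(1-\alpha)_{n-1-k}$, and tracking of signs throughout. Every individual step is elementary, but the notational labour of reconciling the two hypergeometric forms is where attention is needed.
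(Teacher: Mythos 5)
Your proposal is correct, and both halves check out: I verified that the coefficient of $(1-z)^k$ in the negative of the degree-$(n-1)$ Taylor polynomial of $2^n(-\alpha-\beta-\gamma)_n^{-1}P_{\gamma+n}^{(\alpha-n,\beta-n)}$ at $z=1$ agrees with the $j=n-1-k$ term of the stated ${}_3F_2$, using exactly the reflections you flag, e.g.\ $(-\alpha-\beta-\gamma)_n=(-\alpha-\beta-\gamma)(1-\alpha-\beta-\gamma)_{n-1-k}(-1)^k(\alpha+\beta+\gamma-n+1)_k$ and $(1-\alpha)_{n-1-k}=(-1)^{n-1-k}\Gamma(\alpha)/\Gamma(\alpha-n+k+1)$. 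The route differs from the paper's mainly in organization rather than substance. For the first identity the paper integrates the $n=1$ case of \eqref{derJac3} once, picks up the boundary constant from \eqref{Jacone}, iterates $n$ times, and then reverses the resulting finite sum into the ${}_3F_2$ at $2/(1-z)$; you instead exhibit the closed-form $n$-fold antiderivative directly and characterize the multi-integral as that antiderivative minus its Taylor polynomial at $z=1$ (legitimate, since all derivatives of the multi-integral of order $\le n-1$ vanish there). The two computations generate the same $n$ correction terms, but your packaging makes the polynomial nature of the correction transparent. For the second identity the paper invokes Lemma \ref{lem:1.1}, whereas you use the Cauchy repeated-integration kernel $(t-z)^{n-1}/(n-1)!$, termwise integration of \eqref{Jac1}, and the Beta evaluation $\int_z^1(t-z)^{n-1}(1-t)^k\,\dd t=(n-1)!\,k!\,(1-z)^{n+k}/(n+k)!$; this is a concrete, self-contained instance of the same series-rearrangement idea and lands directly on the ${}_3F_2$ at $(1-z)/2$, with the restriction $|1-z|<2$ removed by analytic continuation as you note. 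Both routes are sound; yours is somewhat easier to audit.
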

}
{
\begin{proof}
Considering the $n=1$ case of \eqref{derJac3} and then integrating using \eqref{Jacone} 
produces the following definite integral
\[
\int_{z}^{1} P_\gamma^{(\alpha,\beta)}(w)\,\dd w=
\frac{2}{\alpha+\beta+\gamma}\left(
\frac{\Gamma(\alpha+\gamma+1)}{\Gamma(\alpha)\Gamma(\gamma+2)}-
P_{\gamma+1}^{(\alpha-1,\beta-1)}(z)\right).
\]
Iterating the above expression $n$-times, reversing the order of the sum and utilizing 
standard properties of Pochhammer symbols such as 
\eqref{Pochdef}--\eqref{Poch2},
results in the term involving the
terminating generalized hypergeometric  ${}_3F_2$ series. The term involving the 
Jacobi function of the first kind is clear.
{The second identity is a direct consequence of Lemma \ref{lem:1.1}}.
This completes the proof.
\end{proof}
}
\begin{rem}
Taking into account lemma \ref{lem:1.1} and the previous result 
and setting $\alpha\mapsto \alpha+n$, $\beta\mapsto\beta+n$, 
$\gamma\mapsto\gamma-n$ one obtains the following identity for the 
Taylor series of the Jacobi function of order $n-1$ at $z=1$:
\begin{equation}\label{tayserJf1K}\begin{split}
\sum_{k=0}^{n-1} \frac {d^k}{dz^k}P^{(\alpha,\beta)}_{\gamma}(z)|_{z=1}
\frac{(z-1)^k}{k!}=&
\frac{\Gamma(\alpha+\gamma+1)\Gamma(-\alpha-\beta-\gamma)}
{\Gamma(\alpha+n)(n-1)!}\left(\frac{z-1}2\right)^{n-1}\\ &\times 
\Ohyp32{1-n,1-\alpha-n,1}{\gamma-n+2,1-\alpha-\beta-\gamma-n}{\frac 2{1-z}}.
\end{split}\end{equation}
\end{rem}
{
{
\begin{thm}\label{mintJacF1K-3-inf}Let $n\in\N_0$, $\alpha,\beta,\gamma\in\C$, with $\Re\gamma<-n$,
$\Re(\alpha+\beta+\gamma)>n-1$, $z\in\C\setminus(-\infty,-1]$. 
Then
\begin{equation} \label{mijf1k:zTOinf-3}
\int_{z}^\infty\cdots\int_{z}^\infty (1+w)^\beta
P_\gamma^{(\alpha,\beta)}(w) \,({\mathrm d}w)^n
=\frac{2^n}{(-\alpha\!-\!\beta\!-\!\gamma)_n} 
P_{\gamma+n}^{(\alpha-n,\beta-n)}(z).
\end{equation}
\end{thm}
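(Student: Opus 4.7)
The plan is to follow the same template as the earlier improper-integral theorems \ref{mintJacF1K-1-inf}, \ref{mintJacF1K-2-inf}, and \ref{mintJacF1K-2b-inf}: establish the $n=1$ case by reversing a single-derivative identity, verify that the boundary contribution at $w=\infty$ vanishes under the stated growth hypotheses, and then iterate $n$ times.

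For the base case I would search the derivative identities \eqref{derJac1}, \eqref{derJac2}, \eqref{derJac2b}, \eqref{derJac3} for one whose right-hand side, after a suitable parameter shift, produces exactly the integrand $(1+w)^\beta P_\gamma^{(\alpha,\beta)}(w)$. The natural candidate is Theorem \ref{mderJacF1K-2b}: reading it with $(\alpha,\beta)\mapsto(\alpha-1,\beta+1)$ gives
\[
\frac{\mathrm d}{\mathrm dw}\bigl[(1+w)^{\beta+1}P_\gamma^{(\alpha-1,\beta+1)}(w)\bigr]=(\beta+\gamma+1)(1+w)^\beta P_\gamma^{(\alpha,\beta)}(w),
\]
so the fundamental theorem of calculus converts the single integral into a difference of boundary values.

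The main obstacle is verifying that the limit at $w\to\infty$ is zero and persists through every iteration. Using the hypergeometric representation \eqref{Jac3}, one derives $P_\gamma^{(\alpha,\beta)}(w)=O(w^\gamma)$ as $|w|\to\infty$, with the implicit constant a Gauss ${}_2F_1$ value at $1$. The hypotheses $\Re\gamma<-n$ and $\Re(\alpha+\beta+\gamma)>n-1$ are precisely what guarantee this decay and keep the successive Pochhammer denominators nonzero at each reindexing; by contrast, the finite-endpoint analogue Theorem \ref{mintJacF1K-3} picks up a $(1-z)^{n-1}$ and ${}_3F_2$ correction exactly because the boundary at $w=1$ is not negligible there.

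Finally I would iterate the $n=1$ identity $n$ times. Each iteration shifts the parameter triple in a controlled way, and the accumulated Pochhammer factors reorganize via the standard identity $\prod_{k=0}^{n-1}(c-k)=(-1)^n(-c)_n$ into the stated coefficient $2^n/(-\alpha-\beta-\gamma)_n$; the factor $2^n$ in the numerator arises from the chain-rule factor $\mathrm d/\mathrm dw=-2\,\mathrm d/\mathrm dz$ implicit in the change of variable $w=(1-z)/2$ used to derive the multi-derivative identities of Section~\ref{defJac}. The single sign collected from each boundary difference at infinity combines with these Pochhammer denominators to remove all residual signs, producing the clean final formula.
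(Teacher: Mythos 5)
Your base case is the wrong one, and as a consequence the iteration cannot reach the claimed right-hand side. From Theorem \ref{mderJacF1K-2b} with $(\alpha,\beta)\mapsto(\alpha-1,\beta+1)$ you correctly obtain
\[
\frac{\dd}{\dd w}\bigl[(1+w)^{\beta+1}P_\gamma^{(\alpha-1,\beta+1)}(w)\bigr]=(\beta+\gamma+1)\,(1+w)^\beta P_\gamma^{(\alpha,\beta)}(w),
\]
but integrating this over $(z,\infty)$ and iterating accumulates the denominator $(\beta+\gamma+1)_n$, a prefactor $(1+z)^{\beta+n}$, the parameter shift $(\alpha,\beta,\gamma)\mapsto(\alpha-n,\beta+n,\gamma)$, and no factor $2^n$ anywhere: that is, it reproduces exactly \eqref{mijf1k:zTOinf-2b}, which is already Theorem \ref{mintJacF1K-2b-inf} and which requires $\Re(\beta+\gamma)<-n$, $\Re(\alpha+\gamma)>n-1$ rather than the hypotheses stated here. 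Your closing claim that the Pochhammer factors ``reorganize'' into $2^n/(-\alpha-\beta-\gamma)_n$, with the $2^n$ coming from the chain rule, cannot be carried out: there is no chain-rule factor of $2$ in your base identity, and $(\beta+\gamma+1)_n$ is not $(-\alpha-\beta-\gamma)_n$. The bookkeeping does not close.

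The paper's proof instead takes the $n=1$ case of \eqref{derJac3}, i.e.\ $\frac{\dd}{\dd w}P_{\gamma+1}^{(\alpha-1,\beta-1)}(w)=\tfrac{1}{2}(\alpha+\beta+\gamma)P_\gamma^{(\alpha,\beta)}(w)$, integrates the \emph{unweighted} function $P_\gamma^{(\alpha,\beta)}(w)$ from $z$ to $\infty$ to get $\frac{-2}{\alpha+\beta+\gamma}P_{\gamma+1}^{(\alpha-1,\beta-1)}(z)$, and iterates; this is what produces $2^n/(-\alpha-\beta-\gamma)_n$ and the shift $(\alpha,\beta,\gamma)\mapsto(\alpha-n,\beta-n,\gamma+n)$. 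The factor $(1+w)^\beta$ in the displayed statement is evidently a typographical slip --- compare the weightless finite-endpoint companion, Theorem \ref{mintJacF1K-3}, whose leading term has the identical right-hand side. Your treatment of the vanishing boundary term is on the right track but must be applied to $P_{\gamma+k}^{(\alpha-k,\beta-k)}(w)$ for $k=1,\dots,n$: the two characteristic exponents at infinity are $\gamma+k$ and $-(\alpha+\beta+\gamma+1-k)$, and the hypotheses $\Re\gamma<-n$, $\Re(\alpha+\beta+\gamma)>n-1$ force both to have negative real part at every stage. Relying only on the $O(w^\gamma)$ branch via \eqref{Jac3} is not sufficient, since the ${}_2F_1$ at argument $1$ you invoke need not converge and the second exponent can dominate.
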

}
{
\begin{proof}
Considering the $n=1$ case of \eqref{derJac3},
taking into account the constraints considered in the statement 
and then integrating produces the following improper integral:
\[
\int_{z}^\infty P_\gamma^{(\alpha,\beta)}(w)\,\dd w=
\frac{-2}{\alpha+\beta+\gamma} 
P_{\gamma+1}^{(\alpha-1,\beta-1)}(z).
\]
Iterating the above expression $n$-times completes the proof.
\end{proof}
}
}
If we consider the different hypergeometric 
representations for the Jacobi function of the 
first kind \eqref{Jac1}, \eqref{Jac2}, \eqref{Jac3}, \eqref{Jac4} and 
applying the derivative relations 
\eqref{der2}, \eqref{der4}, \eqref{der6},
\eqref{der9} one obtains the following result.
\begin{thm}\label{thm:3.12}
Let $n\in \mathbb N_0$, $\alpha,\beta,\gamma\in 
\mathbb C$, with $\alpha+\gamma\not \in -\mathbb N$, 
$z\in\C\setminus(-\infty,-1]$. The following identities hold:
\begin{eqnarray}
&&\hspace{-5mm}\left[(z-1)^2\frac{{\mathrm d}}{{\mathrm d} z}\right]^n (z-1)^{\alpha+\beta+\gamma+1}P_\gamma^{(\alpha,\beta)}(z)=
(\alpha+\beta+\gamma+1)_n
(z-1)^{\alpha+\beta+\gamma+1+n}
P_\gamma^{(\alpha,\beta+n)}(z),\label{relmd1} \\
&&\hspace{-5mm}\left[(z-1)^2\frac{{\mathrm d}}{{\mathrm d} z}\right]^n \frac{1}{(z-1)^{\gamma}} P_\gamma^{(\alpha,\beta)}(z)=
\frac{(-\alpha-\gamma)_n}{(z-1)^{\gamma-n}}
P_{\gamma-n}^{(\alpha,\beta+n)}(z),\label{relmd2}\\
&&\hspace{-5mm}\left[(z-1)^2\frac{{\mathrm d}}{{\mathrm d} z}\right]^n (z+1)^\beta(z-1)^{\alpha+\gamma+1} P_\gamma^{(\alpha,\beta)}(z)\nonumber\\[-0.05cm]
&&\hspace{20mm}
=2^n(\gamma+1)_n
(z+1)^{\beta-n} (z-1)^{\alpha+\gamma+1+n} P_{\gamma+n}^{(\alpha,\beta-n)}(z),\label{relmd3}\\[0.15cm]
&&\hspace{-5mm}\left[(z-1)^2\frac{{\mathrm d}}{{\mathrm d} z}\right]^n \frac{(z+1)^\beta}{(z-1)^{\beta+\gamma}} P_\gamma^{(\alpha,\beta)}(z)=2^n(-\beta-\gamma)_n
\frac{(z+1)^{\beta-n}}{(z-1)^{\beta-n+\gamma}}
P_{\gamma}^{(\alpha,\beta-n)}(z).\label{relmd4}
\end{eqnarray}
\end{thm}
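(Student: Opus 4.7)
The plan is to exploit the parallelism between the four Gauss hypergeometric representations \eqref{Jac1}--\eqref{Jac4} and the four derivative relations \eqref{der2}, \eqref{der4}, \eqref{der6}, \eqref{der9}, after transporting the latter from the ordinary $\dd/\dd w$ to the Euler-type operator appearing on the left-hand side. Under the change of variable $w=(z-1)/(z+1)$ the chain rule gives
\[
(z-1)^{2}\frac{\dd}{\dd z} \;=\; 2\,w^{2}\,\frac{\dd}{\dd w}.
\]
Termwise differentiation of the defining ${}_2F_1$ series, together with the Pochhammer identity $(b)_k(b+k)_n=(b)_n(b+n)_k$, then yields the single operator identity on which everything else rests,
\[
\left(w^{2}\frac{\dd}{\dd w}\right)^{n}\bigl[w^{b}\,\Ohyp{2}{1}{a,b}{c}{w}\bigr] \;=\; (b)_n\,w^{b+n}\,\Ohyp{2}{1}{a,b+n}{c}{w},
\]
which is the analogue for $w^{2}\,\dd/\dd w$ of relation \eqref{der4} for $\dd/\dd w$.

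For each of the four claims I would then choose a representation so that the prefactor on the left combines with the hypergeometric prefactor to produce a pure monomial in $w$ whose exponent coincides with one of the top hypergeometric parameters (possibly after interchanging the two numerator parameters by the symmetry of ${}_2F_1$). Concretely, for \eqref{relmd1} use \eqref{Jac4} with $b=\alpha+\beta+\gamma+1$; for \eqref{relmd2} use \eqref{Jac3} with $b=-\gamma$; for \eqref{relmd3} use \eqref{Jac4} with $b=\alpha+\gamma+1$; and for \eqref{relmd4} use \eqref{Jac3} with $b=-\beta-\gamma$. Applying the operator identity above then shifts the matched top parameter upward by $n$, which on the $(\alpha,\beta,\gamma)$ side corresponds precisely to the shifts $\beta\mapsto\beta+n$, $(\gamma,\beta)\mapsto(\gamma-n,\beta+n)$, $(\gamma,\beta)\mapsto(\gamma+n,\beta-n)$, and $\beta\mapsto\beta-n$, respectively.

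The last step is to repackage the shifted hypergeometric via the same representation read with the new parameters, thereby recognising a Jacobi function on the right-hand side. The Pochhammer factor $(b)_n$ coming from the operator identity, combined with the ratio $\Gamma(\alpha+\gamma+1)/\Gamma(\gamma+1)$ of the normalizations of $P_\gamma^{(\alpha,\beta)}$ and $P_{\gamma\pm n}^{(\alpha,\beta\mp n)}$ and with the factors $2^n$ supplied by the chain rule, together deliver the stated constants $(\alpha+\beta+\gamma+1)_n$, $(-\alpha-\gamma)_n$, $2^n(\gamma+1)_n$, and $2^n(-\beta-\gamma)_n$; the identity $(-a)_n=(-1)^n\Gamma(a+1)/\Gamma(a+1-n)$ handles the two cases in which $\gamma$ itself shifts.

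The main obstacle is clerical rather than conceptual: one must carefully track the gamma-function normalizations and the Pochhammer factors on both sides so that all constants line up. Once each identity is paired with its representation and its monomial exponent $b$ as above, there is no further difficulty, and the fourfold parallelism with \eqref{der2}, \eqref{der4}, \eqref{der6}, \eqref{der9} is exactly the sense in which the theorem is obtained by ``applying the derivative relations'' in the sentence preceding its statement.
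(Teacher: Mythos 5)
Your argument is correct --- I checked the parameter matchings and all four constants come out right --- and it is essentially the paper's approach: combine the single Gauss hypergeometric representations of $P_\gamma^{(\alpha,\beta)}$ with a parameter-shifting derivative relation for ${}_2F_1$ and iterate. The packaging differs slightly: the paper establishes each $n=1$ case by differentiating the relevant representation directly in $z$ (pairing \eqref{relmd1}--\eqref{relmd4} with \eqref{Jac1}--\eqref{Jac4} in turn), then multiplies by $(z-1)^2$ and iterates, whereas you conjugate once and for all via $w=(z-1)/(z+1)$, so that $(z-1)^2\,\dd/\dd z=2w^2\,\dd/\dd w$, and apply a single $n$-fold operator identity to \eqref{Jac3} and \eqref{Jac4} only (each used twice, matching the two numerator parameters). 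Your version is arguably tidier and makes explicit that the relation actually doing the work is $\frac{\dd}{\dd w}\,w^{b}\,{}_2F_1(a,b;c;w)=b\,w^{b-1}\,{}_2F_1(a,b+1;c;w)$, i.e.\ DLMF 15.5.3, which is not literally among the four relations \eqref{der2}--\eqref{der9} invoked in the text; both routes reduce to the same bookkeeping of Pochhammer and gamma factors via \eqref{Pochdef} and \eqref{Poch1}.
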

\begin{proof}
First consider the $n=1$ case. 
Start with the representation \eqref{Jac1} multiply it by 
$(z-1)^{\alpha+\beta+\gamma+1}$ and apply the derivative 
relation \eqref{der2}, to obtain
\[
\frac{{\mathrm d}}{{\mathrm d} z} (z-1)^{\alpha+\beta+\gamma+1} P_\gamma^{(\alpha,\beta)}(z)=(\alpha+\beta+\gamma+1)
(z-1)^{\alpha+\beta+\gamma}
P_\gamma^{(\alpha,\beta+1)}(z).
\]
Multiplying the expression by $(z-1)^2$ and iterating the identity produces 
\eqref{relmd1}. 
By repeating an analogous process for the expressions \eqref{Jac2}, \eqref{Jac3},  \eqref{Jac4}, 
the result follows.
\end{proof}
As a consequence of this result, we have the 
following derivative relations.
\begin{cor}\label{cor:3.16}
Let $n\in \mathbb N_0$, $\alpha,\beta,\gamma, z\in 
\mathbb C$. The following identities hold:
\begin{eqnarray}
&&\hspace{-3mm}\left[(z+1)^2\frac{{\mathrm d}}{{\mathrm d} z}\right]^n (z+1)^{\alpha+\beta+\gamma+1}P_\gamma^{(\alpha,\beta)}(z)=
(\alpha+\beta+\gamma+1)_n
(z+1)^{\alpha+\beta+\gamma+1+n}
P_\gamma^{(\alpha+n,\beta)}(z),\label{relmd5}\\
&&\hspace{-3mm}\left[(z+1)^2\frac{{\mathrm d}}{{\mathrm d} z}\right]^n 
\frac{1}{(z+1)^{\gamma}} P_\gamma^{(\alpha,\beta)}(z)=
\frac {(1+\beta+\gamma-n)_n}{(z+1)^{\gamma-n}}
P_{\gamma-n}^{(\alpha+n,\beta)}(z),\label{relmd6}\\
&&\hspace{-3mm}\left[(z+1)^2\frac{{\mathrm d}}{{\mathrm d} z}\right]^n (z-1)^\alpha(z+1)^{\beta+\gamma+1} P_\gamma^{(\alpha,\beta)}(z)\nonumber \\
&&\hspace{32mm}=2^n(\gamma+1)_n
(z-1)^{\alpha-n} (z+1)^{\beta+\gamma+n}
P_{\gamma+n}^{(\alpha-n,\beta)}(z),\label{relmd7}\\
&&\hspace{-3mm}\left[(z+1)^2\frac{{\mathrm d}}{{\mathrm d} z}\right]^n \frac{(z-1)^\alpha}{(z+1)^{\alpha+\gamma}} P_\gamma^{(\alpha,\beta)}(z)=(-2)^n(-\alpha-\gamma)_n
\frac{(z-1)^{\alpha-n}}{(z+1)^{\alpha-n+\gamma}}
P_{\gamma}^{(\alpha-n,\beta)}(z).\label{relmd8}
\end{eqnarray}
\end{cor}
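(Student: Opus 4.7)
The strategy is to mirror the proof of Theorem~\ref{thm:3.12}, interchanging the roles of $(z-1)$ and $(z+1)$ by working with the Gauss hypergeometric representations \eqref{Jac3} and \eqref{Jac4}, whose argument is $w:=(z-1)/(z+1)$, in place of \eqref{Jac1} and \eqref{Jac2}, whose argument is $(1-z)/2$. The essential identity is $(z+1)^2 \frac{\mathrm d}{\mathrm dz} = 2\frac{\mathrm d}{\mathrm dw}$, which follows from $\mathrm dw/\mathrm dz = 2/(z+1)^2$ and, upon iteration as an operator on functions, yields $[(z+1)^2 \frac{\mathrm d}{\mathrm dz}]^n = 2^n \frac{\mathrm d^n}{\mathrm dw^n}$. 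Inverting the substitution gives $z-1 = 2w/(1-w)$ and $z+1 = 2/(1-w)$, which lets us convert the $(z\pm 1)$-weights on the left-hand side of each identity into products $w^s(1-w)^t$ in the $w$-variable.

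For each of the four identities, the plan is as follows: first, substitute the suitable representation from Theorem~\ref{bigPthm} for $P_\gamma^{(\alpha,\beta)}(z)$ into the left-hand side so that the combined expression collapses to a constant multiple of $w^s(1-w)^t \mathbf{F}(\cdots;w)$ for appropriate $s,t$. Then, apply $2^n \frac{\mathrm d^n}{\mathrm dw^n}$ using whichever of \eqref{der2}, \eqref{der4}, \eqref{der6}, \eqref{der9} matches this form, and re-express the result using the same representation with the parameters shifted as indicated on the right-hand side. Concretely: \eqref{relmd5} relies on \eqref{Jac4} combined with \eqref{der2}; \eqref{relmd6} on \eqref{Jac3} combined with \eqref{der2}; \eqref{relmd7} on \eqref{Jac3} combined with \eqref{der9}; and \eqref{relmd8} on \eqref{Jac3} combined with \eqref{der4}. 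This establishes the $n=1$ case, and the general case follows by iteration, since $2\frac{\mathrm d}{\mathrm dw}$ composes cleanly and the Pochhammer prefactors accumulate multiplicatively.

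The main obstacle is bookkeeping: tracking the various powers of $2$ and gamma-function ratios through each substitution, and matching the resulting Pochhammer prefactors with those in the stated right-hand sides. Two Pochhammer identities carry out this identification: $(1+\beta+\gamma-n)_n = (-1)^n (-\beta-\gamma)_n$, which aligns the natural output of \eqref{der2} with the form appearing in \eqref{relmd6}; and $(-\alpha-\gamma)_n\, \Gamma(\alpha+\gamma-n+1) = (-1)^n \Gamma(\alpha+\gamma+1)$, which simultaneously produces the Pochhammer $(-\alpha-\gamma)_n$ and the sign factor $(-2)^n$ (rather than $2^n$) in \eqref{relmd8}.
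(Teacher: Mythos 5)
Your proposal is correct and follows essentially the same route as the paper: the substitution $w=(z-1)/(z+1)$, which turns $(z+1)^2\frac{\dd}{\dd z}$ into $2\frac{\dd}{\dd w}$, combined with the hypergeometric representations whose argument is $w$ and the derivative formulas \eqref{der2}, \eqref{der4}, \eqref{der9}; your explicit pairings and the two Pochhammer identities all check out. One remark: carrying your computation through for \eqref{relmd7} yields $(z+1)^{\beta+\gamma+n+1}$ on the right-hand side rather than the printed $(z+1)^{\beta+\gamma+n}$ (as the $n=0$ case already shows), so the discrepancy there is a typo in the stated identity, not a defect of your argument.
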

\begin{proof}
Starting using \eqref{der9} with
\begin{equation}
w=\frac{z-1}{z+1}, \quad 
\frac{\dd}{\dd w}=\frac{(z+1)^2}{2}\frac{\dd}{\dd z},
\label{shift3}
\end{equation}
and substituting in \eqref{Jac1}, \eqref{Jac2}, 
\eqref{Jac3} and \eqref{Jac4} in an analogous 
way as we did in the Theorem \ref{thm:3.12} one obtains 
the expressions.
This completes the proof.
\end{proof}

\begin{cor}[Rodrigues-type formula]
Let $n\in\N_0$, $\alpha,\beta,z\in\C$. 
The Jacobi polynomial admits the following Rodrigues-type 
formula:
\begin{align}\label{rodfjf1k-1}
P_{n}^{(\alpha,\beta)}(z)=&\frac{1}{2^n n!}\frac{1}
{(z-1)^{\alpha}(z+1)^{\beta+n+1}}
\left[(z+1)^2\frac{{\mathrm d}}{{\mathrm d} z}\right]^n\!
 (z-1)^{\alpha+n}(z+1)^{\beta+1}\\
 \label{rodfjf1k-2}
=&\frac{1}{2^n n!}\frac{1}
{(z-1)^{\alpha+n+1}(z+1)^{\beta}}
\left[(z-1)^2\frac{{\mathrm d}}{{\mathrm d} z}\right]^n\!
 (z-1)^{\alpha}(z+1)^{\beta+n+1}.
\end{align}
\end{cor}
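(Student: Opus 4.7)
The plan is to recognize both identities as immediate specializations of the operator identities established in Theorem \ref{thm:3.12} and Corollary \ref{cor:3.16}, obtained by setting the complex parameter $\gamma$ to zero and relabeling $\alpha$ or $\beta$. The key observation is that $P_0^{(\alpha,\beta)}(z)\equiv 1$ identically on its domain, which follows directly from the Gauss hypergeometric representation \eqref{Jac1} since the series terminates trivially. Hence evaluating the general derivative relations at $\gamma=0$ collapses the Jacobi function factor on the right-hand side to a Jacobi polynomial, with no additional analytic subtleties.

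For the first formula \eqref{rodfjf1k-1}, I would start from identity \eqref{relmd7} in Corollary \ref{cor:3.16}:
\[
\left[(z+1)^2\frac{\dd}{\dd z}\right]^n (z-1)^\alpha(z+1)^{\beta+\gamma+1} P_\gamma^{(\alpha,\beta)}(z)
=2^n(\gamma+1)_n (z-1)^{\alpha-n} (z+1)^{\beta+\gamma+n} P_{\gamma+n}^{(\alpha-n,\beta)}(z).
\]
Setting $\gamma=0$, replacing $\alpha$ by $\alpha+n$, and using $P_0^{(\alpha+n,\beta)}(z)=1$ together with $(1)_n=n!$, the right-hand side becomes $2^n n!\,(z-1)^\alpha (z+1)^{\beta+n} P_n^{(\alpha,\beta)}(z)$. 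Solving for $P_n^{(\alpha,\beta)}(z)$ and dividing by the appropriate power of $(z-1)(z+1)$ yields \eqref{rodfjf1k-1}.

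For the second formula \eqref{rodfjf1k-2}, I would use the parallel identity \eqref{relmd3} from Theorem \ref{thm:3.12}:
\[
\left[(z-1)^2\frac{\dd}{\dd z}\right]^n (z+1)^\beta(z-1)^{\alpha+\gamma+1} P_\gamma^{(\alpha,\beta)}(z)
=2^n(\gamma+1)_n (z+1)^{\beta-n}(z-1)^{\alpha+\gamma+1+n}P_{\gamma+n}^{(\alpha,\beta-n)}(z).
\]
Here I would set $\gamma=0$ and replace $\beta$ by $\beta+n$, again invoking $P_0^{(\alpha,\beta+n)}(z)=1$ and $(1)_n=n!$, and then solve for $P_n^{(\alpha,\beta)}(z)$ to extract \eqref{rodfjf1k-2}. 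Both identities thus reduce to a single algebraic rearrangement after specialization.

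There is no substantial obstacle here; the work has already been done in proving Theorem \ref{thm:3.12} and Corollary \ref{cor:3.16}. The only point worth flagging is verifying that the specialization $\gamma=0$ is legitimate: Corollary \ref{cor:3.16} is stated for $\alpha,\beta,\gamma,z\in\C$ without exclusions, and after relabeling, the parameter constraints $\alpha+\gamma,\beta+\gamma\notin-\N$ that appear elsewhere are automatically satisfied because the Jacobi polynomial $P_n^{(\alpha,\beta)}$ is well-defined for all $\alpha,\beta\in\C$. The remaining observation is that formulas \eqref{rodfjf1k-1} and \eqref{rodfjf1k-2} can also be obtained from one another by the well-known symmetry $P_n^{(\alpha,\beta)}(z)=(-1)^nP_n^{(\beta,\alpha)}(-z)$, providing an independent consistency check.
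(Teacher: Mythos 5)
Your overall strategy is exactly the paper's: set $\gamma=0$ in the operator identities of Theorem \ref{thm:3.12} and Corollary \ref{cor:3.16}, use $P_0^{(\alpha,\beta)}\equiv1$ (which does follow from \eqref{Jac1}), and relabel parameters; your choice of sources (\eqref{relmd7} for the $(z+1)^2$-operator formula, \eqref{relmd3} for the $(z-1)^2$-operator one) is the natural pairing. The genuine gap is the step you wave through as ``dividing by the appropriate power'': the exponents do \emph{not} match the printed formulas, and checking them is the entire content of the corollary. Concretely, \eqref{relmd7} with $\gamma=0$ and $\alpha\mapsto\alpha+n$ gives
\begin{equation*}
P_n^{(\alpha,\beta)}(z)=\frac{1}{2^nn!}\,\frac{1}{(z-1)^{\alpha}(z+1)^{\beta+n}}\left[(z+1)^2\frac{\dd}{\dd z}\right]^n(z-1)^{\alpha+n}(z+1)^{\beta+1},
\end{equation*}
with $(z+1)^{\beta+n}$ rather than the $(z+1)^{\beta+n+1}$ appearing in \eqref{rodfjf1k-1}. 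A direct check at $n=1$ shows that \eqref{rodfjf1k-1} is the correct statement and that the right-hand side of \eqref{relmd7} as printed is short by one power of $(z+1)$ (it should read $(z+1)^{\beta+\gamma+n+1}$, consistent with the exponent bookkeeping in \eqref{relmd5}). So your derivation, taken literally, proves a formula that is false; you would need to first correct, or independently re-derive, \eqref{relmd7}.

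For the second identity the situation is reversed: \eqref{relmd3} is correct, but your specialization $\gamma=0$, $\beta\mapsto\beta+n$ yields
\begin{equation*}
P_n^{(\alpha,\beta)}(z)=\frac{1}{2^nn!}\,\frac{1}{(z-1)^{\alpha+n+1}(z+1)^{\beta}}\left[(z-1)^2\frac{\dd}{\dd z}\right]^n(z-1)^{\alpha+1}(z+1)^{\beta+n},
\end{equation*}
whose operand is $(z-1)^{\alpha+1}(z+1)^{\beta+n}$, not the $(z-1)^{\alpha}(z+1)^{\beta+n+1}$ printed in \eqref{rodfjf1k-2}. These are genuinely different: at $n=1$ the printed \eqref{rodfjf1k-2} evaluates to $\tfrac{z+1}{2(z-1)}\bigl((\alpha+\beta+2)z+\alpha-\beta-2\bigr)$, which is singular at $z=1$ and so cannot equal $P_1^{(\alpha,\beta)}(z)$; the display above (equivalently, the image of \eqref{rodfjf1k-1} under $z\mapsto-z$, $\alpha\leftrightarrow\beta$) is what your argument actually proves. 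Your proposed consistency check via $P_n^{(\alpha,\beta)}(z)=(-1)^nP_n^{(\beta,\alpha)}(-z)$ is precisely the right instinct --- had you carried it out, it would have exposed both discrepancies. In short: right method, same as the paper's, but the proof as written asserts conclusions that the algebra does not deliver, and the missing verification is where all the content lies.
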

\begin{proof}
Setting $\gamma\mapsto 0$ and $\beta\mapsto \beta+n$ in 
\eqref{relmd3} the first identity follows. 
Setting $\gamma\mapsto 0$ and $\alpha\mapsto \beta+n$ in 
\eqref{relmd7} the second identity follows. 
\end{proof}
\begin{thm}
{Let $n\in\N_0$, $\alpha,\beta,\gamma\in \C$,
with $\alpha+\gamma\not \in -\mathbb N$, 
$z\in\C\setminus(-\infty,-1]$. 
The following multi-integrals hold:
\begin{eqnarray}
&&\hspace{-20mm}\int_1^z\cdots\int_1^z (w-1)^{\alpha+\beta+\gamma+1}
P_\gamma^{(\alpha,\beta)}(w)[(w-1)^{-2}\,{\mathrm d}w]^n\nonumber\\
&&\hspace{30mm}\label{relmi1-E}=\frac {(z-1)^{\alpha+\beta+\gamma+1-n}}
{(\alpha+\beta+\gamma-n+1)_{n}}P_\gamma^{(\alpha,\beta-n)}(z),\\
&&\hspace{-20mm}\int_1^z\cdots\int_1^z (w+1)^{\beta} (w-1)^{\alpha+\gamma+1}
P_{\gamma}^{(\alpha,\beta)}(w)[(w-1)^{-2}\,{\mathrm d}w]^n\nonumber\\
&&\hspace{30mm}=\frac{(z+1)^{\beta+n}(z-1)^{\alpha+\gamma-n+1}}{2^n(\gamma-n+1)_n}
P_{\gamma-n}^{(\alpha,\beta+n)}(z),\label{relmi3-E}\\
&&\hspace{-20mm}\int_1^z\cdots\int_1^z 
(w-1)^{\alpha}(w+1)^{\beta+\gamma+1}P_\gamma^{(\alpha,\beta)}(w) 
[(w+1)^{-2}\,{\mathrm d}w]^n\nonumber\\
&&\hspace{30mm} =\frac{(z-1)^{\alpha+n}(z+1)^{\beta+\gamma-n+1}}
{2^n(\gamma-n+1)_n} P_{\gamma-n}^{(\alpha+n,\beta)}(z)\label{relmi7-E},\\
&&\hspace{-20mm}\int_1^z\cdots\int_1^z  \frac{(w-1)^\alpha}{(w+1)^{\alpha+\gamma}} 
P_\gamma^{(\alpha,\beta)}(w)[(w+1)^{-2}\,{\mathrm d}w]^n\nonumber\\
&&\hspace{30mm}=\frac{(z-1)^{\alpha+n}}{2^n(1+\alpha+\gamma)_n(z+1)^{\alpha+n+\gamma}}
P_{\gamma}^{(\alpha+n,\beta)}(z),\label{relmi8-E}
\end{eqnarray}
where $\Re(\alpha+\beta+\gamma+1)>n$, 
$\Re(\alpha+\gamma+1)>n$, 
$\Re(\alpha+n)>0$, $\Re(\alpha+n)>0$, respectively.
}
\end{thm}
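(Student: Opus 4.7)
The plan is to recognize the four multi-integral identities as the integral inverses of the multi-derivative relations proved in Theorem \ref{thm:3.12} and Corollary \ref{cor:3.16}. Introduce the two first-order differential operators
\[
L_- := (z-1)^2\frac{\dd}{\dd z},\qquad L_+ := (z+1)^2\frac{\dd}{\dd z},
\]
whose right-inverses with base point $1$ are
\[
L_\pm^{-1}g(z)=\int_1^{z}\frac{g(w)}{(w\pm1)^{2}}\,\dd w.
\]
Under this identification, \eqref{relmi1-E} corresponds to inverting \eqref{relmd1}, \eqref{relmi3-E} to inverting \eqref{relmd3}, \eqref{relmi7-E} to inverting \eqref{relmd7}, and \eqref{relmi8-E} to inverting \eqref{relmd8}.

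To carry out the first identity, I would take \eqref{relmd1} with the parameter shift $\beta\mapsto\beta-n$, which reads
\[
L_-^{n}\!\left[(z-1)^{\alpha+\beta+\gamma+1-n}P_\gamma^{(\alpha,\beta-n)}(z)\right]
=(\alpha+\beta+\gamma-n+1)_n\,(z-1)^{\alpha+\beta+\gamma+1}P_\gamma^{(\alpha,\beta)}(z),
\]
then divide by the Pochhammer coefficient and apply $L_-^{-n}$ (an $n$-fold integration from $1$ to $z$ against $(w-1)^{-2}\,\dd w$) to extract \eqref{relmi1-E}. The same pattern delivers the remaining three identities: for \eqref{relmi3-E} take \eqref{relmd3} with $(\gamma,\beta)\mapsto(\gamma-n,\beta+n)$; for \eqref{relmi7-E} take \eqref{relmd7} with $(\gamma,\alpha)\mapsto(\gamma-n,\alpha+n)$; and for \eqref{relmi8-E} take \eqref{relmd8} with $\alpha\mapsto\alpha+n$, followed by the Pochhammer simplification $(-\alpha-\gamma-n)_n=(-1)^n(\alpha+\gamma+1)_n$, which converts the $(-2)^n$ appearing on the differential side into the $2^n$ visible on the integral side.

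The main obstacle is ensuring that $L_\pm^{-1}$ genuinely inverts $L_\pm$ at each of the $n$ successive stages. Since $L_\pm^{-1}L_\pm f=f-f(1)$, this requires the function being integrated to vanish as $w\to 1$ and, more generally, its first $n-1$ further applications of $L_\pm$ to vanish there as well. Writing out these successive primitives via the very same Theorem \ref{thm:3.12} and Corollary \ref{cor:3.16} (applied iteratively with smaller values of $n$), one sees that at the $k$th stage the relevant prefactor is $(w-1)^{\alpha+\beta+\gamma+1-k}$ for \eqref{relmi1-E}, $(w-1)^{\alpha+\gamma+1-k}$ for \eqref{relmi3-E}, and $(w-1)^{\alpha+k}$ (arising after the appropriate shifts) for \eqref{relmi7-E} and \eqref{relmi8-E}. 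The stated hypotheses $\Re(\alpha+\beta+\gamma+1)>n$, $\Re(\alpha+\gamma+1)>n$, and $\Re(\alpha+n)>0$ are precisely what force all $n$ boundary contributions to vanish, and a direct induction on $n$ then closes out each identity.
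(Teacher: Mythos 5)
Your proposal is correct and follows essentially the same route as the paper: both invert the operator identities \eqref{relmd1}, \eqref{relmd3}, \eqref{relmd7}, \eqref{relmd8} via the fundamental theorem of calculus with the boundary terms at $w=1$ killed by the stated constraints, and then iterate $n$ times (your $L_\pm^{-n}$ formulation is just a compact packaging of the paper's step-by-step iteration). The only difference is organizational, in that you perform the parameter shift once up front rather than one unit per integration step.
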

\begin{proof}
Consider the $n=1$ case of \eqref{relmd1} and then integrate both sides 
using the fundamental theorem of calculus. 
This produces the following definite integral
\[
\int_1^z (w-1)^{\alpha+\beta+\gamma+1}
P_\gamma^{(\alpha,\beta)}(w)\, (w-1)^{-2}\,{\mathrm d}w=
\frac {(z-1)^{\alpha+\beta+\gamma+1}}
{\alpha+\beta+\gamma}P_\gamma^{(\alpha,\beta-1)}(z).
\]
Iterating the above expression $n$-times completes the proof. 
The process for the remaining cases, i.e., for the cases starting with 
\eqref{relmd3}, \eqref{relmd7} and \eqref{relmd8}, 
is similar so we will omit their proofs. Hence the result holds.
\end{proof}

\begin{thm}\label{thm:3.19}
Let $n\in\N_0$, $\alpha,\beta,\gamma\in \C$,
such that $\alpha+\gamma\not \in -\mathbb N$, 
$z\in\C\setminus(-\infty,-1]$.
The following multi-integrals hold:
\begin{align}
\int_{1}^{z}\cdots\int_{1}^{z} 
&\frac {P_\gamma^{(\alpha,\beta)}(w)}{(w-1)^{\gamma}}
[(w-1)^{-2}\,{\mathrm d}w]^n=\dfrac{(-1)^n}{(\alpha+\gamma+1)_n}
\dfrac{P_{\gamma+n}^{(\alpha,\beta-n)}(z)}{(z-1)^{\gamma+n}}
,\label{relmi2-E}\\[3mm]
\int_{1}^{z}\cdots\int_{1}^{z}& \dfrac {(w+1)^\beta}{(w-1)^{\gamma+\beta}}
P_\gamma^{(\alpha,\beta)}(w) [(w-1)^{-2}\,{\mathrm d}w]^n
=\dfrac{(-1)^n}{2^n(\beta+\gamma+1)_n}\dfrac 
{(z+1)^\beta+n}{(z-1)^{\gamma+\beta}}
P_\gamma^{(\alpha,\beta)}(z)
,\label{relmi4-E}
\end{align}
where $\Re\gamma<-n$ and $\Re(\gamma+\beta)<0$, respectively.
\end{thm}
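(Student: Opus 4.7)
My plan is to obtain each of \eqref{relmi2-E} and \eqref{relmi4-E} by inverting the appropriate multi-derivative relation from Theorem~\ref{thm:3.12}. I describe the proof of \eqref{relmi2-E} in detail, as that of \eqref{relmi4-E} follows by the same pattern. In \eqref{relmd2} I first apply the parameter shift $(\gamma,\beta)\mapsto(\gamma+n,\beta-n)$; using the Pochhammer identity $(-\alpha-\gamma-n)_n=(-1)^n(\alpha+\gamma+1)_n$, which is immediate from \eqref{Poch1}, the shifted relation takes the form
\[
\left[(z-1)^2\frac{\dd}{\dd z}\right]^n\frac{P_{\gamma+n}^{(\alpha,\beta-n)}(z)}{(z-1)^{\gamma+n}}=\frac{(-1)^n(\alpha+\gamma+1)_n}{(z-1)^{\gamma}}\,P_\gamma^{(\alpha,\beta)}(z).
\]

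Next, I specialize to $n=1$: the identity reads $\dd F/\dd z=G(z)/(z-1)^2$ with $F(z)=(z-1)^{-\gamma-1}P_{\gamma+1}^{(\alpha,\beta-1)}(z)$. Integrating from $1$ to $z$ via the fundamental theorem of calculus and invoking \eqref{Jacone} to conclude that $F(w)\to 0$ as $w\to 1^+$ under $\Re\gamma<-1$ produces the single-integral identity
\[
\int_1^z\frac{P_\gamma^{(\alpha,\beta)}(w)}{(w-1)^{\gamma+2}}\,\dd w=\frac{-1}{\alpha+\gamma+1}\cdot\frac{P_{\gamma+1}^{(\alpha,\beta-1)}(z)}{(z-1)^{\gamma+1}},
\]
which is the $n=1$ case of \eqref{relmi2-E}. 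I then iterate this operation $n$ times: the reciprocal Pochhammer symbol $(\alpha+\gamma+1)_n^{-1}$ builds up in the denominator, the sign $(-1)^n$ accumulates, and at the $k$-th stage the boundary term at $w=1$ is proportional to $(z-1)^{-\gamma-k}P_{\gamma+k}^{(\alpha,\beta-k)}(z)\big|_{z\to 1^+}$, which vanishes for every $1\le k\le n$ exactly under the blanket hypothesis $\Re\gamma<-n$.

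The proof of \eqref{relmi4-E} is structurally identical, now starting from \eqref{relmd4} with the shift $\beta\mapsto\beta+n$ and using the companion identity $(-\beta-\gamma-n)_n=(-1)^n(\beta+\gamma+1)_n$; the boundary-term vanishing at $w=1$ is controlled by the hypothesis $\Re(\gamma+\beta)<0$. The main obstacle in both cases is the careful bookkeeping of the boundary terms through the successive integrations: at each of the $n$ stages the current antiderivative must tend to zero as $w\to 1^+$, and these requirements are precisely what the real-part hypotheses on $\gamma$ and $\gamma+\beta$ encode.
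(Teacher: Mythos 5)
Your approach is exactly the one the paper intends (its own ``proof'' is left to the reader as analogous to the earlier theorems): invert the $n=1$ cases of \eqref{relmd2} and \eqref{relmd4} after the parameter shifts $(\gamma,\beta)\mapsto(\gamma+n,\beta-n)$ and $\beta\mapsto\beta+n$, check that the boundary term at $w=1$ vanishes via \eqref{Jacone}, and iterate; for \eqref{relmi2-E} your computation, constants, and the hypothesis $\Re\gamma<-n$ all check out. One caution on \eqref{relmi4-E}: carrying your iteration through actually yields
\begin{equation*}
\int_{1}^{z}\!\cdots\!\int_{1}^{z}\frac{(w+1)^\beta}{(w-1)^{\gamma+\beta}}P_\gamma^{(\alpha,\beta)}(w)\,[(w-1)^{-2}\,\dd w]^n
=\frac{(-1)^n}{2^n(\beta+\gamma+1)_n}\,\frac{(z+1)^{\beta+n}}{(z-1)^{\gamma+\beta+n}}\,P_\gamma^{(\alpha,\beta+n)}(z),
\end{equation*}
with the $k$-th boundary term vanishing when $\Re(\beta+\gamma+k)<0$, so the argument needs $\Re(\beta+\gamma)<-n$, not $\Re(\beta+\gamma)<0$. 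Your closing claim that the stated hypotheses are ``precisely'' what the boundary terms require is therefore not right for the second identity; the printed right-hand side and condition of \eqref{relmi4-E} appear to contain typographical errors (missing $+n$ shifts), and you should state the corrected identity and condition rather than assert that your derivation reproduces the theorem verbatim.
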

The proof is analogous to those carried out previously, and we 
leave it to the reader
\begin{thm}
Let $n\in\N_0$, $\alpha,\beta,\gamma\in \C$,
such that $\alpha+\gamma\not \in -\mathbb N$, 
$z\in\C\setminus(-\infty,-1]$.
The following multi-integrals hold:
\begin{eqnarray}
&&\hspace{-0.5cm}\int_{1}^{z}\cdots\int_{1}^{z} 
(w+1)^{\alpha+\beta+\gamma+1}P_\gamma^{(\alpha,\beta)}(w)
[(w+1)^{-2}\,{\mathrm d}w]^n=\frac{(z+1)^{\alpha+\beta+\gamma+1-n}
P_{\gamma}^{(\alpha-n,\beta)}(z)}{(\alpha+\beta+\gamma+1-n)_n}
\nonumber\\[1mm]
&&\hspace{0.5cm}-\frac{2^{\alpha+\beta+\gamma+1-n}\Gamma(\alpha+\gamma)}
{(\alpha+\beta+\gamma)\Gamma(\alpha, \gamma+1,n)}\left(\frac{z-1}{z+1}\right)^{n-1}\!
\hyp32{-n+1,1-\alpha,1}{1-\alpha-\gamma,1-\alpha-\beta-\gamma}
{\frac{z+1}{z-1}},
\label{relmi5-E}\\[3mm]
&&\hspace{-0.5cm}\int_{1}^{z}\cdots\int_{1}^{z}\frac{1}{(w+1)^{\gamma}}
P_\gamma^{(\alpha,\beta)}(w)[(w+1)^{-2}\,
{\mathrm d}w]^n=\dfrac{P_{\gamma+n}^{(\alpha-n,\beta)}(z)}
{(\beta+\gamma+1)_n(z+1)^{\gamma+n}}\nonumber\\
&&\hspace{0.5cm}
-\frac{\Gamma(\alpha+\gamma+1)}
{2^{\gamma+n}(\beta+\gamma+1)\Gamma(\alpha, \gamma+2,n)}
\left(\frac{z-1}{z+1}\right)^{n-1}\!
\hyp32{-n+1,1-\alpha,1}{2+\gamma,2+\beta+\gamma}{\frac{z+1}{z-1}}.
\label{relmi6-E}
\end{eqnarray}
\end{thm}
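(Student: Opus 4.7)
The plan is to mimic the strategy used in Theorem \ref{mintJacF1K-3}. For each of the two identities I start from the corresponding $n=1$ case of the differential relations in Corollary \ref{cor:3.16}, rewrite it as an antiderivative statement with respect to the measure $(w+1)^{-2}\,\dd w$, integrate once from $1$ to $z$ using the special value \eqref{Jacone} to handle the boundary contribution at $w=1$, and then iterate $n$ times.

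For \eqref{relmi5-E}, the shift $\alpha\mapsto\alpha-1$ in the $n=1$ case of \eqref{relmd5} yields
\[
(w+1)^{2}\frac{\dd}{\dd w}\!\left[(w+1)^{\alpha+\beta+\gamma}P_\gamma^{(\alpha-1,\beta)}(w)\right]
=(\alpha+\beta+\gamma)\,(w+1)^{\alpha+\beta+\gamma+1}P_\gamma^{(\alpha,\beta)}(w),
\]
so that after dividing by $(w+1)^{2}$ the integrand of \eqref{relmi5-E} is displayed as an exact derivative. Integrating from $1$ to $z$ and evaluating at $w=1$ via \eqref{Jacone} gives the $n=1$ instance. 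On each subsequent iteration the Jacobi-function part of the previous answer has precisely the same shape with $\alpha$ decreased by one unit, so the ``homogeneous'' piece telescopes into $(z+1)^{\alpha+\beta+\gamma+1-n}P_\gamma^{(\alpha-n,\beta)}(z)/(\alpha+\beta+\gamma+1-n)_n$. The boundary constant produced at step $k$ must be integrated $n-k$ further times against $(w+1)^{-2}\dd w$; because repeated antiderivatives of the constant function $1$ with respect to that measure give successive powers of $-1/(w+1)$, the step-$k$ contribution becomes a polynomial in $(z-1)/(z+1)$ of degree $n-k$.

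Collecting the $n$ cumulative boundary contributions, reversing the order of the resulting double sum and applying the Pochhammer identities \eqref{Poch1}--\eqref{Poch2} (most importantly $(a)_{n-k}=(-1)^{k}(a)_n/(1-a-n)_k$) reorganises the sum as a terminating ${}_3F_2$ of argument $(z+1)/(z-1)$ with upper parameter $1-n$, matching the right-hand side of \eqref{relmi5-E}. The proof of \eqref{relmi6-E} is entirely parallel, starting from the $n=1$ case of \eqref{relmd6} with the shift $(\alpha,\gamma)\mapsto(\alpha-1,\gamma+1)$, so that $P_\gamma^{(\alpha,\beta)}(w)(w+1)^{-\gamma-2}$ is exhibited as an exact derivative, and then iterating. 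A cleaner alternative is to apply Lemma \ref{lem:1.1} directly after the substitution $u=(z-1)/(z+1)$: since $(w+1)^{-2}\dd w=\tfrac{1}{2}\dd u$ and the representations \eqref{Jac3}--\eqref{Jac4} rewrite the two integrands as constant multiples of a ${}_2F_1$ in $u$, the lemma applies with $\lambda_{(a,b,c)}=ab/c$ and delivers the stated ${}_3F_2$ in a single step.

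The main obstacle I anticipate is the combinatorial bookkeeping of the accumulated boundary terms: at each iteration one must simultaneously track the Pochhammer prefactor that has built up and the specific polynomial in $1/(w+1)$ that the constant has become through its prior integrations. Writing the resulting $n$-fold sum explicitly and verifying that it collapses to the precise ${}_3F_2$ with upper parameters $(1-n,1-\alpha,1)$ and lower parameters as in \eqref{relmi5-E}--\eqref{relmi6-E} is the only delicate step; the telescoping of the Jacobi-function main term is immediate from the iterated application of the $n=1$ formula.
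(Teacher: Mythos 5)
Your proposal is correct and follows essentially the same route as the paper: integrate the $n=1$ case of the (parameter-shifted) derivative relations \eqref{relmd5} and \eqref{relmd6} against $(w+1)^{-2}\,\dd w$, evaluate the boundary term at $w=1$ via \eqref{Jacone}, iterate $n$ times using $\int_1^z\left(\frac{w-1}{w+1}\right)^k\frac{\dd w}{(w+1)^2}=\frac{1}{2(k+1)}\left(\frac{z-1}{z+1}\right)^{k+1}$, and reverse the accumulated finite sum into the terminating ${}_3F_2$ with Pochhammer identities. The only minor imprecision is that each step-$k$ boundary contribution is a single monomial $\bigl(\tfrac{z-1}{z+1}\bigr)^{n-k}$ rather than a polynomial of that degree, which if anything simplifies the bookkeeping you flag as the delicate step.
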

\begin{proof}
Considering the $n=1$ case of \eqref{relmd6} and then integrating produces the 
definite integral
\[
\int_{1}^z (w+1)^{\alpha+\beta+\gamma+1}P_\gamma^{(\alpha,\beta)}(w)
\frac {{\mathrm d}w}{(w+1)^{2}}=\frac{(w+1)^{\alpha+\beta+\gamma}
P_\gamma^{(\alpha-1,\beta)}(z)
-2^{\alpha+\beta+\gamma}P_\gamma^{(\alpha-1,\beta)}(1)}
{\beta+\gamma+1},
\]
and due to \eqref{Jacone}
the result follows for such a case. 
Iterating the above expression $n$-times completes the proof, using \eqref{Jacone} as
well as the identities \eqref{Pochdef}, \eqref{Poch1}, taking into account 
\[
\int_1^z \left(\frac{z-1}{z+1}\right)^k \frac {{\mathrm d}w}{(w+1)^{2}}=
\frac 1{2(k+1)}\left(\frac{z-1}{z+1}\right)^{k+1} \quad k=0, 1, ...,
\]
and reversing the finite series, i.e., for any 
non-negative integer $m$
\[
\sum_{k=0}^m \frac{(a_1, ...,a_{r+1})_k}{(b_1,...,b_r)_k}\frac{z^k}{k!}=
\frac{(a_1, ...,a_{r+1})_{m}}{(b_1,...,b_r)_m}\frac{z^m}{m!}\hyp{r+2}{r+1}
{-m,1-m-b_1,...,1-m-b_r,1}
{1-m-a_1,...,1-m-a_{r+1}}{\frac1z},
\]
the result follows.
The proof of the other integral is analogous and we omit its proof.
\end{proof}
\subsection{The Jacobi functions of the second kind}
In this section we derive some multi-integrals for the Jacobi function of the second kind. Since both the Jacobi function of the first kind and the Jacobi function of the second kind are 
 strongly connected  (see
\eqref{jf2kconjf1k}), 
we expect to obtain similar 
multi-integrals to those obtained in the previous section.

\begin{thm}\label{mintJacF2K-2}Let $n\in\N_0$, $\alpha,\beta,\gamma \in\C$, 
$z\in \C\setminus[-1,1]$, with $\Re\alpha$, $\Re\beta>-1$, $\Re\gamma>n$. 
Then
\begin{equation}
\int_{z}^\infty\cdots\int_{z}^\infty (w-1)^\alpha(1+w)^\beta
Q_\gamma^{(\alpha,\beta)}(w) \,({\mathrm d}w)^n
=\frac {(z-1)^{\alpha+n}(1+z)^{\beta+n}}{2^n(\gamma-n+1)_n}
Q_{\gamma-n}^{(\alpha+n,\beta+n)}(z).
\end{equation}
\end{thm}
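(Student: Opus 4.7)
The plan is to invert the multi-derivative identity of Theorem \ref{mderJacF2K-1} by successive integration, in direct parallel with the proof of Theorem \ref{mintJacF1K-1} but with the interval of integration changed to $[z,\infty)$.

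First I would specialise \eqref{derJac2K} to $n=1$ and apply the parameter shift $(\gamma,\alpha,\beta)\mapsto(\gamma-1,\alpha+1,\beta+1)$ to obtain
\[
\frac{\dd}{\dd w}\bigl[(w-1)^{\alpha+1}(w+1)^{\beta+1}Q_{\gamma-1}^{(\alpha+1,\beta+1)}(w)\bigr]
=-2\gamma\,(w-1)^{\alpha}(w+1)^{\beta}Q_{\gamma}^{(\alpha,\beta)}(w).
\]
Then I would integrate both sides along any path from $z$ to $\infty$ in $\C\setminus[-1,1]$ and solve for the integral of the right-hand side.

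The key step is to verify that the boundary contribution at infinity vanishes. Using any of the large-$|w|$ hypergeometric representations from Theorem \ref{thmQ}, for instance \eqref{dJsk2}, one sees that $Q_{\gamma-1}^{(\alpha+1,\beta+1)}(w)=O(|w|^{-\alpha-\beta-\gamma-2})$ as $|w|\to\infty$, so
\[
(w-1)^{\alpha+1}(w+1)^{\beta+1}Q_{\gamma-1}^{(\alpha+1,\beta+1)}(w)=O(|w|^{-\Re\gamma}),
\]
which tends to zero under the hypothesis $\Re\gamma>n\ge 1$. One thus obtains the base case
\[
\int_{z}^{\infty}(w-1)^{\alpha}(w+1)^{\beta}Q_{\gamma}^{(\alpha,\beta)}(w)\,\dd w
=\frac{1}{2\gamma}(z-1)^{\alpha+1}(z+1)^{\beta+1}Q_{\gamma-1}^{(\alpha+1,\beta+1)}(z).
\]

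Finally, I would iterate this single-integral identity $n$ times; at the $k$th step the integrand is $(w-1)^{\alpha+k}(w+1)^{\beta+k}Q_{\gamma-k}^{(\alpha+k,\beta+k)}(w)$, whose boundary behaviour at $\infty$ is controlled by $\Re(\gamma-k)>0$, which follows from $\Re\gamma>n$ for $k=0,1,\ldots,n-1$. The accumulated scalar prefactor becomes
\[
\prod_{k=0}^{n-1}\frac{1}{2(\gamma-k)}=\frac{1}{2^{n}\gamma(\gamma-1)\cdots(\gamma-n+1)}=\frac{1}{2^{n}(\gamma-n+1)_n},
\]
which yields the claimed identity. The only genuine obstacle is the boundary-term check at infinity; the rest is Pochhammer bookkeeping.
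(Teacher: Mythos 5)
Your proof is correct and follows essentially the same route as the paper: integrate the $n=1$ case of \eqref{derJac2K} with shifted parameters $(\gamma,\alpha,\beta)\mapsto(\gamma-1,\alpha+1,\beta+1)$ from $z$ to $\infty$, check the boundary term, and iterate. You are in fact more careful than the paper's own proof, which omits the decay estimate $O(|w|^{-\Re\gamma})$ justifying the vanishing of the boundary term at infinity and contains a subscript typo (it writes $Q_{\gamma+1}^{(\alpha+1,\beta+1)}$ where $Q_{\gamma-1}^{(\alpha+1,\beta+1)}$ is meant); your Pochhammer bookkeeping $\prod_{k=0}^{n-1}\bigl(2(\gamma-k)\bigr)^{-1}=1/\bigl(2^n(\gamma-n+1)_n\bigr)$ is also right.
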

\begin{proof}
Considering the $n=1$ case of \eqref{derJac2K} and then 
integrating produces the following definite integral
\[\begin{split}
\int_{z}^\infty (w-1)^\alpha(1+w)^\beta
Q_\gamma^{(\alpha,\beta)}(w) \,{\mathrm d}w=&
\frac{1}{2\gamma} \lim_{w\to \infty}\left(
(z-1)^{\alpha+1}(1+z)^{\beta+1}
Q_{\gamma+1}^{(\alpha+1,\beta+1)}(z)
\right.\\
&-\left.(w-1)^{\alpha+1}(1+w)^{\beta+1}
Q_{\gamma+1}^{(\alpha+1,\beta+1)}(w)\right)\\
=& \frac {(z-1)^{\alpha+1}(1+z)^{\beta+1}}{2 \gamma} 
Q_{\gamma+1}^{(\alpha+1,\beta+1)}(z).
\end{split}\]
Iterating the above expression $n$-times completes the proof.
\end{proof}
\begin{thm}\label{mderJacF2K-2b}
Let $n\in\N_0$, $\alpha,\beta,\gamma,z\in\C$. Then
\begin{equation}
\frac{{\mathrm d}^n}{{\mathrm d} z^n}
(z-1)^\alpha Q_\gamma^{(\alpha,\beta)}(z)=
(-\alpha-\gamma)_n (z-1)^{\alpha-n} 
Q_{\gamma}^{(\alpha-n,\beta+n)}(z).
\label{derJac2Kb}
\end{equation}
\end{thm}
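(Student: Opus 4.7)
The plan is induction on $n$. The base case $n=0$ is trivial since $(-\alpha-\gamma)_0=1$. For the inductive step, once the $n=1$ identity
\[
\frac{\dd}{\dd z}\bigl[(z-1)^\alpha Q_\gamma^{(\alpha,\beta)}(z)\bigr] = (-\alpha-\gamma)(z-1)^{\alpha-1}Q_\gamma^{(\alpha-1,\beta+1)}(z)
\]
is in hand, applying it with $(\alpha,\beta)\mapsto(\alpha-n,\beta+n)$ to the inductive hypothesis introduces an extra factor $-\alpha-\gamma+n$, and $(-\alpha-\gamma)_n\cdot(-\alpha-\gamma+n)=(-\alpha-\gamma)_{n+1}$ telescopes the Pochhammer coefficient correctly.

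All the real work is in the $n=1$ case. I will use the representation \eqref{dJsk3}, which has the convenient feature that the $(z-1)^{-\alpha}$ denominator cancels against the multiplier $(z-1)^\alpha$, giving
\[
(z-1)^\alpha Q_\gamma^{(\alpha,\beta)}(z) = \frac{2^{\alpha+\beta+\gamma}\Gamma(\alpha+\gamma+1)\Gamma(\beta+\gamma+1)}{(z+1)^{\beta+\gamma+1}}\Ohyp21{\gamma+1,\beta+\gamma+1}{\alpha+\beta+2\gamma+2}{u},
\]
with $u:=2/(z+1)$. In this form $\alpha$ appears only through the bottom ${}_2F_1$ parameter and the gamma prefactor. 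Differentiating via the chain rule ($\dd u/\dd z=-u^2/2$) together with the $n=1$ case of \eqref{der2} produces two ${}_2F_1$ terms, one from differentiating $(z+1)^{-\beta-\gamma-1}$ and one from differentiating the ${}_2F_1$ itself.

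The decisive step is to collapse this two-term combination into a single Olver hypergeometric of the form $\Ohyp21{\gamma+1,\beta+\gamma+2}{\alpha+\beta+2\gamma+2}{u}$, which by \eqref{dJsk3} is precisely what appears in $(z-1)^{\alpha-1}Q_\gamma^{(\alpha-1,\beta+1)}(z)$. This collapse is accomplished by the Gauss contiguous relation
\[
\Ohyp21{a,b+1}{c}{u} = \Ohyp21{a,b}{c}{u} + a\,u\,\Ohyp21{a+1,b+1}{c+1}{u},
\]
applied with $a=\gamma+1$, $b=\beta+\gamma+1$, $c=\alpha+\beta+2\gamma+2$. The gamma-function bookkeeping then uses $(-\alpha-\gamma)\Gamma(\alpha+\gamma)=-\Gamma(\alpha+\gamma+1)$ and $\Gamma(\beta+\gamma+2)=(\beta+\gamma+1)\Gamma(\beta+\gamma+1)$ to match the two sides exactly.

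The main obstacle is purely the accounting of the prefactors: the two ${}_2F_1$'s produced by differentiation must be combined via a standard ${}_2F_1$ contiguous identity that, although classical, is not among the four derivative formulas \eqref{der2}--\eqref{der9} explicitly singled out in the paper. It would therefore be stated or derived in one line by comparing coefficients in the series definition \eqref{genhyp} before being invoked.
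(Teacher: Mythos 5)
Your proposal is correct and follows essentially the same route as the paper: both start from representation \eqref{dJsk3}, prove the $n=1$ case, and iterate, with the Pochhammer telescoping exactly as you describe. The only difference is cosmetic—the paper differentiates the combined quantity $(z+1)^{-\beta-\gamma-1}\Ohyp21{\gamma+1,\beta+\gamma+1}{\alpha+\beta+2\gamma+2}{\frac{2}{1+z}}$ term by term in a single step (equivalently, it uses the derivative relation $\frac{\dd}{\dd w}\,w^{b}\,\Ohyp21{a,b}{c}{w}=b\,w^{b-1}\Ohyp21{a,b+1}{c}{w}$ after the substitution $w=2/(1+z)$), whereas you arrive at the same single ${}_2F_1$ via the product rule followed by your contiguous relation, which is that same series identity in disguise.
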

\begin{proof}
First we prove the $n=1$ case. First consider \eqref{dJsk3} and multiply this expression 
by $(z-1)^\alpha$ and differentiate with respect to $z$. This obtains
\[\begin{split}
\frac{\dd}{\dd z} (z-1)^\alpha Q_\gamma^{(\alpha,\beta)}(z)=&
\frac{\dd}{\dd z}\frac{2^{\alpha+\beta+\gamma}\Gamma(\alpha+\gamma+1)\Gamma(\beta+\gamma+1)}
{(z+1)^{\beta+\gamma+1}}\Ohyp21{\gamma+1,\beta+\gamma+1}{\alpha+\beta+2\gamma+2}
{\frac{2}{1+z}}\\
= &-\frac{2^{\alpha+\beta+\gamma}\Gamma(\alpha+\gamma+1)\Gamma(\beta+\gamma+2)}
{\Gamma(\alpha+\beta+2\gamma+2)(z+1)^{\beta+\gamma+2}}
\sum_{k=0}^\infty \frac{(\gamma+1,\beta+\gamma+2)_k}
{(\alpha+\beta+2\gamma+2)_k k!}\left(\frac 2{1+z}\right)^k\\
=&-\frac{2^{\alpha+\beta+\gamma}\Gamma(\alpha+\gamma+1)\Gamma(\beta+\gamma+2)}
{(z+1)^{\beta+\gamma+1}}\Ohyp21{\gamma+1,\beta+\gamma+2}{\alpha+\beta+2\gamma+2}
{\frac{2}{1+z}}\\
=& -(\alpha+\beta)(z-1)^{\alpha-1} Q_{\gamma}^{(\alpha-1,\beta+1)}(z).
\end{split}\]
The $n$th derivative case is obtained by iterating the above procedure.
\end{proof}
\begin{thm}\label{mintJacF2K-3}Let $n\in\N_0$, $\alpha,\beta,\gamma\in\C$, 
$z\in \C\setminus[-1,1]$,
with $\Re\alpha>-1$, $\Re\beta>n-1$, $\Re(\beta+\gamma+1)>n$. Then
\begin{equation}\begin{split}
\int_{z}^\infty\cdots\int_{z}^\infty (w-1)^\alpha
Q_\gamma^{(\alpha,\beta)}(w) \,({\mathrm d}w)^n
=&\frac {(z-1)^{\alpha+n}}{(\alpha+\gamma+1)_n} 
Q_{\gamma}^{(\alpha+n,\beta-n)}(z).
\end{split}\end{equation}
\end{thm}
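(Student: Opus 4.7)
The plan is to follow the template established in Theorems \ref{mintJacF2K-2} and \ref{mintJacF1K-2-inf}: derive the $n=1$ identity from the one-step derivative relation of Theorem \ref{mderJacF2K-2b}, verify that the boundary contribution at infinity vanishes, and iterate.

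First, I would specialize Theorem \ref{mderJacF2K-2b} to $n=1$ and shift $(\alpha,\beta)\mapsto(\alpha+1,\beta-1)$, which yields
\[
\frac{\mathrm d}{\mathrm d z}\bigl[(z-1)^{\alpha+1}Q_\gamma^{(\alpha+1,\beta-1)}(z)\bigr]=-(\alpha+\gamma+1)(z-1)^{\alpha}Q_\gamma^{(\alpha,\beta)}(z).
\]
Dividing by $-(\alpha+\gamma+1)$ and integrating from $z$ to $\infty$, the fundamental theorem of calculus yields
\[
\int_z^\infty(w-1)^\alpha Q_\gamma^{(\alpha,\beta)}(w)\,\mathrm d w=\frac{(z-1)^{\alpha+1}}{\alpha+\gamma+1}Q_\gamma^{(\alpha+1,\beta-1)}(z),
\]
provided the boundary contribution $(w-1)^{\alpha+1}Q_\gamma^{(\alpha+1,\beta-1)}(w)$ at $w=\infty$ vanishes.

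The asymptotic check at infinity is the only non-mechanical step. From the hypergeometric representation \eqref{dJsk2} one reads off $Q_\gamma^{(a,b)}(w)=O(w^{-(a+b+\gamma+1)})$ as $w\to\infty$, so at the $k$th iteration step the boundary term $(w-1)^{\alpha+k}Q_\gamma^{(\alpha+k,\beta-k)}(w)$ behaves like $w^{k-\beta-\gamma-1}$. The hypothesis $\Re(\beta+\gamma+1)>n$ is precisely what guarantees that this exponent has negative real part for every $k\in\{1,\ldots,n\}$, making every boundary term vanish.

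The remaining steps are routine: iterating the $n=1$ identity with the shifts $(\alpha,\beta)\mapsto(\alpha+k-1,\beta-k+1)$ at the $k$th stage, each integration contributes a factor $(\alpha+\gamma+k)^{-1}$; these telescope into $1/(\alpha+\gamma+1)_n$ and accumulate the overall index shift $(\alpha,\beta)\mapsto(\alpha+n,\beta-n)$ inside the Jacobi function on the right-hand side. The additional conditions $\Re\alpha>-1$ and $\Re\beta>n-1$ ensure that every intermediate integrand remains locally integrable along the path from $z$ to $\infty$, completing the argument.
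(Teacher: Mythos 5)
Your proposal is correct and follows essentially the same route as the paper: take the $n=1$ case of the derivative relation \eqref{derJac2Kb} (with the shift $(\alpha,\beta)\mapsto(\alpha+1,\beta-1)$), integrate from $z$ to $\infty$, and iterate $n$ times to accumulate the factor $1/(\alpha+\gamma+1)_n$. Your explicit verification via \eqref{dJsk2} that the boundary terms $(w-1)^{\alpha+k}Q_\gamma^{(\alpha+k,\beta-k)}(w)=O(w^{k-\beta-\gamma-1})$ vanish under the hypothesis $\Re(\beta+\gamma+1)>n$ is a welcome detail that the paper leaves implicit.
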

\begin{proof}
Considering the $n=1$ case of \eqref{derJac2Kb} and then 
integrating produces the following definite integral
\[\begin{split}
\int_{z}^\infty (w-1)^\alpha
Q_\gamma^{(\alpha,\beta)}(w) \,{\mathrm d}w=&
\frac{1}{\alpha+\gamma+1} 
(z-1)^{\alpha+1}Q_{\gamma}^{(\alpha+1,\beta-1)}(z).
\end{split}\]
Iterating the above expression $n$-times completes the proof.
\end{proof}
\begin{thm}Let $n\in\N_0$, $\alpha,\beta,\gamma,z\in\C$. Then
\begin{equation}
\frac{{\mathrm d}^n}{{\mathrm d} z^n}
 Q_\gamma^{(\alpha,\beta)}(z)=
(-2)^{-n}(\alpha+\beta+\gamma+1)_n 
Q_{\gamma-n}^{(\alpha+n,\beta+n)}(z).
\label{derJac2Kc}
\end{equation}
\end{thm}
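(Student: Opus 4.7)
My plan is to prove the identity by induction on $n$, using the first-order lowering operator \eqref{lowOPjFK} as the engine. The base case $n=0$ is trivial, while the $n=1$ case is precisely \eqref{lowOPjFK} itself, since $(-2)^{-1}(\alpha+\beta+\gamma+1)_1 = -\tfrac{1}{2}(\alpha+\beta+\gamma+1)$ matches the coefficient there exactly.

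For the inductive step, I would assume the formula at order $n-1$ and apply $\dd/\dd z$ to both sides, which by the induction hypothesis yields $(-2)^{-(n-1)}(\alpha+\beta+\gamma+1)_{n-1}$ times $\dd/\dd z\, Q_{\gamma-n+1}^{(\alpha+n-1,\beta+n-1)}(z)$. A second application of \eqref{lowOPjFK}, now with the shifted parameters $(\gamma,\alpha,\beta)\mapsto(\gamma-n+1,\alpha+n-1,\beta+n-1)$, lowers that derivative to $-\tfrac{1}{2}(\alpha+\beta+\gamma+n)\,Q_{\gamma-n}^{(\alpha+n,\beta+n)}(z)$.

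Putting the two coefficients together yields $(-2)^{-n}(\alpha+\beta+\gamma+1)_{n-1}(\alpha+\beta+\gamma+n)$, and the standard Pochhammer recursion $(\alpha+\beta+\gamma+1)_{n-1}(\alpha+\beta+\gamma+n)=(\alpha+\beta+\gamma+1)_n$ closes the induction. I do not anticipate any real obstacle here; the only point that requires bookkeeping is making sure the shifted parameter sum $(\alpha+n-1)+(\beta+n-1)+(\gamma-n+1)+1$ simplifies to $\alpha+\beta+\gamma+n$ so that the Pochhammer factor telescopes cleanly. An alternative route, more in the spirit of Theorem \ref{mderJacF1K-3}, would be to start from representation \eqref{dJsk4}, expand as a series in $1/(z-1)$, differentiate termwise using the power rule, and recognize the result as $Q_{\gamma-n}^{(\alpha+n,\beta+n)}(z)$ via the same representation; but the induction above is considerably shorter and uses only already-proven machinery.
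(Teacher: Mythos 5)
Your induction is exactly the paper's argument: the $n=1$ case is the lowering relation \eqref{lowOPjFK}, and the general case follows by iterating it, with the parameter sum $(\alpha+n-1)+(\beta+n-1)+(\gamma-n+1)+1=\alpha+\beta+\gamma+n$ making the Pochhammer factor telescope as you note. The proposal is correct and matches the paper's proof in both method and bookkeeping.
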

\begin{proof}
The $n=1$ case follows from \eqref{lowOPjFK} and for $n>1$, the result is obtained by
iterating the above procedure.
\end{proof}
{\begin{thm}\label{mintJacF2K-1}Let $n\in\N_0$, $\alpha,\beta,\gamma\in\C$, 
$z\in \C\setminus[-1,1]$,
with $\Re\alpha>n-1$, $\Re\beta>n-1$, $\Re(\alpha+\beta+\gamma+1)>n$. Then
\begin{equation}\begin{split}
\int_{z}^\infty\cdots\int_{z}^\infty 
Q_\gamma^{(\alpha,\beta)}(w) \,({\mathrm d}w)^n
=&\frac {2^n}{(\alpha+\beta+\gamma-n+1)_n}Q_{\gamma+n}^{(\alpha-n,\beta-n)}(z).
\end{split}\end{equation}
\end{thm}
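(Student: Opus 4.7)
The plan is to mirror the iterated single-integral strategy used in the preceding improper-integral theorems (e.g.\ Theorems~\ref{mintJacF2K-2} and \ref{mintJacF2K-3}), starting from the multi-derivative identity \eqref{derJac2Kc}. First, I would take the $n=1$ case of \eqref{derJac2Kc} and perform the parameter shift $(\gamma,\alpha,\beta)\mapsto(\gamma+1,\alpha-1,\beta-1)$, which produces
\[
\frac{\mathrm d}{\mathrm dw}Q_{\gamma+1}^{(\alpha-1,\beta-1)}(w)
=-\tfrac{1}{2}(\alpha+\beta+\gamma)\,Q_{\gamma}^{(\alpha,\beta)}(w),
\]
exhibiting $Q_\gamma^{(\alpha,\beta)}$ as an exact derivative (up to a constant) on $\C\setminus[-1,1]$.

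Next I would integrate this identity from $z$ to $\infty$ along a path in $\C\setminus[-1,1]$. The boundary contribution at infinity is controlled by the hypergeometric representation \eqref{dJsk2}: as $|w|\to\infty$, the $_2\bm F_1$ factor tends to a constant, so $Q_{\gamma+1}^{(\alpha-1,\beta-1)}(w)$ decays like $w^{-(\alpha+\beta+\gamma)}$, which vanishes under the (weak) hypothesis $\Re(\alpha+\beta+\gamma+1)>1$. This yields the $n=1$ improper-integral formula
\[
\int_{z}^{\infty}Q_\gamma^{(\alpha,\beta)}(w)\,\mathrm dw
=\frac{2}{\alpha+\beta+\gamma}\,Q_{\gamma+1}^{(\alpha-1,\beta-1)}(z).
\]
Finally I would iterate this identity $n$ times, applying the same rule at each step to $Q_{\gamma+k}^{(\alpha-k,\beta-k)}$ for $k=0,1,\ldots,n-1$. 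At the $k$-th stage a prefactor $2/(\alpha+\beta+\gamma-k+1)$ is generated, and the full product telescopes to
\[
\prod_{k=1}^{n}\frac{2}{\alpha+\beta+\gamma-k+1}=\frac{2^{n}}{(\alpha+\beta+\gamma-n+1)_{n}},
\]
which is exactly the constant appearing in the claim.

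The main obstacle is to verify the vanishing of the boundary term at infinity at \emph{every} iteration. At step $k\in\{1,\dots,n\}$ the antiderivative involves $Q_{\gamma+k}^{(\alpha-k,\beta-k)}(w)$, whose leading asymptotic (read off \eqref{dJsk2}) is $w^{-(\alpha+\beta+\gamma+1-k)}$; the worst case $k=n$ requires precisely $\Re(\alpha+\beta+\gamma+1)>n$, matching the hypothesis of the theorem. The side conditions $\Re\alpha,\Re\beta>n-1$ are used to ensure absolute integrability of the integrands at each stage and to keep the shifted parameters inside the domain of definition provided by Theorem~\ref{thmQ}. Beyond this analytic point, the remainder of the argument is a routine bookkeeping of Pochhammer shifts, so no further subtlety is expected.
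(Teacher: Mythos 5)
Your proposal is correct and follows essentially the same route as the paper: the paper's proof likewise takes the $n=1$ case of \eqref{derJac2Kc}, integrates it to obtain $\int_{z}^{\infty}Q_\gamma^{(\alpha,\beta)}(w)\,\dd w=\frac{2}{\alpha+\beta+\gamma}\,Q_{\gamma+1}^{(\alpha-1,\beta-1)}(z)$, and iterates $n$ times. Your additional verification of the decay $Q_{\gamma+k}^{(\alpha-k,\beta-k)}(w)\sim Cw^{-(\alpha+\beta+\gamma+1-k)}$ from \eqref{dJsk2}, showing the boundary terms vanish precisely under $\Re(\alpha+\beta+\gamma+1)>n$, is a detail the paper leaves implicit.
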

\begin{proof}
Considering the $n=1$ case of \eqref{derJac2Kc} and then 
integrating produces the following definite integral
\[\begin{split}
\int_{z}^\infty
Q_\gamma^{(\alpha,\beta)}(w) \,{\mathrm d}w=&
\frac 2{\alpha+\beta+\gamma}Q_{\gamma+1}^{(\alpha-1,\beta-1)}(z).
\end{split}\]
Iterating the above expression $n$-times completes the proof.
\end{proof}
}
{If we consider, as we did in the case of the Jacobi functions of the 
first kind, the different hypergeometric representations for the Jacobi 
function of the second kind \eqref{dJsk1}, \eqref{dJsk4}, \eqref{dJsk3},
\eqref{dJsk2} and applying the derivative relations 
\eqref{der2}, \eqref{der4}, \eqref{der6},
\eqref{der9} one obtains the following result.
\begin{thm}\label{thm:3.27}
Let $n\in \mathbb N_0$, $\alpha,\beta,\gamma\in 
\mathbb C$, with $\alpha+\gamma\not \in -\mathbb N$, 
$z\in\C\setminus[-1,1]$. The following identities hold:
\begin{eqnarray}
&&\hspace{-4mm}\left[(z-1)^2\frac{{\mathrm d}}{{\mathrm d} z}\right]^n (z-1)^{\alpha+\beta+\gamma+1}Q_\gamma^{(\alpha,\beta)}(z)=
(\alpha+\beta+\gamma+1)_n
(z-1)^{\alpha+\beta+\gamma+1+n}
Q_\gamma^{(\alpha,\beta+n)}(z),\label{relmd1-1}\\
&&\hspace{-4mm}\left[(z-1)^2\frac{{\mathrm d}}{{\mathrm d} z}\right]^n \frac{1}{(z-1)^{\gamma}} Q_\gamma^{(\alpha,\beta)}(z)=
\frac {(-\alpha-\gamma)_n}{(z-1)^{\gamma-n}}
Q_{\gamma-n}^{(\alpha,\beta+n)}(z),\label{relmd2-1}\\
&&\hspace{-4mm}\left[(z-1)^2\frac{{\mathrm d}}{{\mathrm d} z}\right]^n \hspace{-2.5mm}(z+1)^\beta(z-1)^{\alpha+\gamma+1} Q_\gamma^{(\alpha,\beta)}(z)\nonumber \\
&&\hspace{30mm}=2^n(\gamma+1)_n
(z+1)^{\beta-n} (z-1)^{\alpha+\gamma+1+n}
Q_{\gamma+n}^{(\alpha,\beta-n)}(z),\label{relmd3-1}\\
&&\hspace{-4mm}\left[(z-1)^2\frac{{\mathrm d}}{{\mathrm d} z}\right]^n \frac{(z+1)^\beta}{(z-1)^{\beta+\gamma}} Q_\gamma^{(\alpha,\beta)}(z)=2^n(-\beta-\gamma)_n
\frac{(z+1)^{\beta-n}}{(z-1)^{\beta-n+\gamma}}
Q_{\gamma}^{(\alpha,\beta-n)}(z).\label{relmd4-1}
\end{eqnarray}
\end{thm}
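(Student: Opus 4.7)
The plan is to imitate the strategy used in the proof of Theorem \ref{thm:3.12}. All four identities in Theorem \ref{thm:3.27} involve the same operator $[(z-1)^2\frac{\dd}{\dd z}]^n$, so the natural change of variable is $w=2/(1-z)$, under which $\frac{\dd w}{\dd z}=\frac{2}{(1-z)^2}$ and therefore $(z-1)^2\frac{\dd}{\dd z}=2\frac{\dd}{\dd w}$, giving $[(z-1)^2\frac{\dd}{\dd z}]^n=2^n\frac{\dd^n}{\dd w^n}$. This reduces each identity to one of the four standard Olver-scaled derivative rules \eqref{der2}, \eqref{der4}, \eqref{der6}, \eqref{der9}, applied to whichever of the representations \eqref{dJsk1} or \eqref{dJsk4} (both of which are in the variable $2/(1-z)$) is most convenient.

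Concretely, I would pair each identity with a representation and a derivative rule as follows. For \eqref{relmd1-1}, start from \eqref{dJsk1} and multiply by $(z-1)^{\alpha+\beta+\gamma+1}$; since $(z-1)/(z+1)=1/(1-w)$, the prefactor collapses to $(1-w)^{-\beta}$, and with $a=\gamma+1$, $b=\alpha+\gamma+1$, $c=\alpha+\beta+2\gamma+2$ one has $a+b-c=-\beta$, so \eqref{der6} applies and produces the Pochhammer factors $(c-a)_n(c-b)_n=(\alpha+\beta+\gamma+1)_n(\beta+\gamma+1)_n$. For \eqref{relmd2-1}, start from \eqref{dJsk4} and multiply by $(z-1)^{-\gamma}$; the prefactor becomes a pure power of $w$ matching $w^{c-1}$, and \eqref{der4} applies. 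For \eqref{relmd3-1}, multiply \eqref{dJsk1} by $(z+1)^\beta(z-1)^{\alpha+\gamma+1}$; the prefactor cancels entirely and only the bare ${}_2\bm{F}_1$ remains, so \eqref{der2} applies. For \eqref{relmd4-1}, multiply \eqref{dJsk4} by $(z+1)^\beta/(z-1)^{\beta+\gamma}$; the prefactor is proportional to $w^{c-1}(1-w)^{a+b-c}$, so \eqref{der9} applies.

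In every case I would establish the $n=1$ case explicitly, then iterate by reapplying the same identity with $(\alpha,\beta,\gamma)$ shifted appropriately (as in the $P$ case of Theorem \ref{thm:3.12}), noting that the factor $(z-1)^2$ reappears on each pass so the composite operator $[(z-1)^2\frac{\dd}{\dd z}]^n$ assembles correctly. Finally one has to translate the right-hand side back into a Jacobi-function-of-the-second-kind notation using the same representation \eqref{dJsk1} or \eqref{dJsk4} to close the identity. The main obstacle is not conceptual but bookkeeping: one must track constants of the form $2^{\alpha+\beta+\gamma}\Gamma(\alpha+\gamma+1)\Gamma(\beta+\gamma+1)$, the signs introduced by $(z-1)^k=(-2/w)^k$, and ratios of gamma functions like $\Gamma(\alpha+\gamma-n+1)/\Gamma(\alpha+\gamma+1)$, which must be converted to Pochhammer symbols via \eqref{Poch1} to match the claimed prefactors $(\alpha+\beta+\gamma+1)_n$, $(-\alpha-\gamma)_n$, $(\gamma+1)_n$, $(-\beta-\gamma)_n$. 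Once these identifications are made carefully in the $n=1$ case, the induction is routine and the four identities follow in complete analogy with Theorem \ref{thm:3.12}, so in the written proof I would give \eqref{relmd1-1} in detail and indicate that the remaining three follow by repeating the procedure with the representations \eqref{dJsk4}, \eqref{dJsk1}, \eqref{dJsk4} and the derivative rules \eqref{der4}, \eqref{der2}, \eqref{der9} respectively.
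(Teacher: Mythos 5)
Your proposal is correct and follows exactly the route the paper intends: the paper's proof is just ``analogous to Theorem~\ref{thm:3.12}'', and the preamble to Theorem~\ref{thm:3.27} prescribes precisely your combination of the representations \eqref{dJsk1}, \eqref{dJsk4} with the derivative rules \eqref{der2}, \eqref{der4}, \eqref{der6}, \eqref{der9}. Your pairings (and the substitution $w=2/(1-z)$ turning $[(z-1)^2\frac{\dd}{\dd z}]^n$ into $2^n\frac{\dd^n}{\dd w^n}$, which in fact makes the $n=1$-then-iterate step unnecessary) all check out, including the absorption of the extra Pochhammer factors such as $(\beta+\gamma+1)_n$ into the gamma-function normalization of the shifted $Q$.
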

\begin{proof}
The proof is analogous to the proof of Theorem \ref{thm:3.12}. We leave this to the reader.
\end{proof}
}
\begin{cor}
Let $n\in \mathbb N_0$, $\alpha,\beta,\gamma, z\in 
\mathbb C$. The following identities hold:
\begin{eqnarray}
&&\hspace{-5.1mm}\left[(z+1)^2\frac{{\mathrm d}}{{\mathrm d} z}\right]^n (z+1)^{\alpha+\beta+\gamma+1}Q_\gamma^{(\alpha,\beta)}(z)=
(\alpha+\beta+\gamma+1)_n
(z+1)^{\alpha+\beta+\gamma+1+n}
Q_\gamma^{(\alpha+n,\beta)}(z),\label{relmd5-1}\\
&&\hspace{-5.1mm}\left[(z+1)^2\frac{{\mathrm d}}{{\mathrm d} z}\right]^n 
\frac{1}{(z+1)^{\gamma}} Q_\gamma^{(\alpha,\beta)}(z)=
\frac {(1+\beta+\gamma-n)_n}{(z+1)^{\gamma-n}}
Q_{\gamma-n}^{(\alpha+n,\beta)}(z),\label{relmd6-1}\\
&&\hspace{-5.1mm}\left[(z+1)^2\frac{{\mathrm d}}{{\mathrm d} z}\right]^n (z-1)^\alpha(z+1)^{\beta+\gamma+1} Q_\gamma^{(\alpha,\beta)}(z)\nonumber\\
&& \hspace{30mm}=2^n(\gamma+1)_n
(z-1)^{\alpha-n} (z+1)^{\beta+\gamma+n}
Q_{\gamma+n}^{(\alpha-n,\beta)}(z),\label{relmd7-1}\\
&&\hspace{-5.1mm}\left[(z+1)^2\frac{{\mathrm d}}{{\mathrm d} z}\right]^n \frac{(z-1)^\alpha}{(z+1)^{\alpha+\gamma}} Q_\gamma^{(\alpha,\beta)}(z)=(-2)^n(-\alpha-\gamma)_n
\frac{(z-1)^{\alpha-n}}{(z+1)^{\alpha-n+\gamma}}
Q_{\gamma}^{(\alpha-n,\beta)}(z).\label{relmd8-1}
\end{eqnarray}
\end{cor}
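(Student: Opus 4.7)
The plan is to imitate the proof of Corollary~\ref{cor:3.16} (the analogous statement for $P_\gamma^{(\alpha,\beta)}$), substituting the second-kind representations \eqref{dJsk1}--\eqref{dJsk2} of Theorem~\ref{thmQ} in place of the first-kind representations \eqref{Jac1}--\eqref{Jac4}. The key operator identity is the one used already in \eqref{shift3}: with $w=(z-1)/(z+1)$, one has $(z+1)^2\,\dd/\dd z = 2\,\dd/\dd w$, so that $[(z+1)^2\,\dd/\dd z]^n = 2^n\,\dd^n/\dd w^n$ on functions of $w$ alone. Equivalently, taking $u=2/(z+1)=1-w$ gives $[(z+1)^2\,\dd/\dd z]^n = (-2)^n\,\dd^n/\dd u^n$, which is the form directly compatible with the $_2F_1$ arguments $2/(1\pm z)$ appearing in \eqref{dJsk1}--\eqref{dJsk2}.

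For each of \eqref{relmd5-1}--\eqref{relmd8-1} I would pair one of the representations \eqref{dJsk1}--\eqref{dJsk2} with the matching derivative formula among \eqref{der2}, \eqref{der4}, \eqref{der6}, \eqref{der9}, so that the prefactor $(z\pm 1)^{\cdots}$ multiplying $Q_\gamma^{(\alpha,\beta)}(z)$ on the left-hand side combines with the factors already carried by the representation to produce an expression of the shape $u^{c-1}(1-u)^{a+b-c}\Ohyp21{a,b}{c}{u}$ (or one of the simpler templates from \eqref{der2}, \eqref{der4}). Concretely, \eqref{relmd5-1} uses \eqref{dJsk2} together with \eqref{der2}; \eqref{relmd6-1} uses \eqref{dJsk3} with \eqref{der4}; \eqref{relmd7-1} and \eqref{relmd8-1} use \eqref{dJsk3} and \eqref{dJsk2} together with \eqref{der6} and \eqref{der9}, respectively. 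In each case the $n=1$ identity is read off by re-identifying the shifted $_2F_1$ as a $Q_{\gamma\pm 1}^{(\cdots)}$ via the same $Q$-representation with the parameters shifted accordingly; the $\Gamma$-ratios that appear are absorbed into the Pochhammer-symbol prefactor using \eqref{Pochdef}.

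The general $n$ case then follows by iteration, because the right-hand side of the $n=1$ identity is of precisely the same structural shape as the integrand on the left-hand side with all parameters uniformly shifted by one, so repeated application accumulates the Pochhammer symbols $(\alpha+\beta+\gamma+1)_n$, $(1+\beta+\gamma-n)_n$, $2^n(\gamma+1)_n$, and $(-2)^n(-\alpha-\gamma)_n$ appearing in \eqref{relmd5-1}--\eqref{relmd8-1}. The main obstacle will be the bookkeeping of the $\Gamma$-function and sign prefactors: choosing the correct representation-plus-derivative-formula pair for each identity, and tracking the minus sign produced by $(z+1)^2\,\dd/\dd z = -2\,\dd/\dd u$ whenever the natural variable for the $_2F_1$ is $u=2/(z+1)$ rather than $w$. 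Once the $n=1$ case is verified for each of the four templates the iteration is mechanical, so the proof can legitimately be concluded with ``proceeding analogously to Theorem~\ref{thm:3.27}'', as was done for Corollary~\ref{cor:3.16}.
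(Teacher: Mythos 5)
Your overall strategy is the same as the paper's (the paper itself only says ``analogous to Corollary~\ref{cor:3.16}''), and your key observation --- that the natural variable is $u=2/(z+1)$, for which $(z+1)^2\frac{\dd}{\dd z}=-2\frac{\dd}{\dd u}$ --- is exactly right. But two of your four concrete pairings are misassigned. Multiplying \eqref{dJsk3} into the template $u^{c-1}\,{}_2F_1$ of \eqref{der4} produces the prefactor $(z-1)^\alpha(z+1)^{-\alpha-\gamma}$ of \eqref{relmd8-1}, not the $(z+1)^{-\gamma}$ of \eqref{relmd6-1}; and pairing \eqref{dJsk3} with \eqref{der6} (whose template carries $(1-u)^{a+b-c}=(1-u)^{-\alpha}$) produces the prefactor $(z+1)^{\alpha+\beta+\gamma+1}$ of \eqref{relmd5-1}, not that of \eqref{relmd7-1}. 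The pairings that actually match the left-hand sides are: \eqref{dJsk2}+\eqref{der2} or \eqref{dJsk3}+\eqref{der6} for \eqref{relmd5-1}; \eqref{dJsk2}+\eqref{der4} or \eqref{dJsk3}+\eqref{der9} for \eqref{relmd6-1}; \eqref{dJsk3}+\eqref{der2} or \eqref{dJsk2}+\eqref{der6} for \eqref{relmd7-1}; and \eqref{dJsk3}+\eqref{der4} or \eqref{dJsk2}+\eqref{der9} for \eqref{relmd8-1}.

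The more serious issue is that the sign bookkeeping you defer as ``mechanical'' is where the content lies, and it does not come out neutral. Unlike the first-kind case, where the argument $(z-1)/(z+1)$ gives $(z+1)^2\frac{\dd}{\dd z}=+2\frac{\dd}{\dd w}$, every second-kind representation \eqref{dJsk1}--\eqref{dJsk2} has argument $2/(1\pm z)$, so the operator contributes $(-2)^n$; carrying any of the correct pairings through, the extra $(-1)^n$ is not absorbed by the Gamma-to-Pochhammer conversions, and each right-hand side of \eqref{relmd5-1}--\eqref{relmd8-1} acquires a factor $(-1)^n$ relative to what is printed. You can confirm this independently at $n=1$ for \eqref{relmd5-1} from the product rule and the lowering relation \eqref{lowOPjFK}, which give
\begin{equation*}
(z+1)^2\frac{\dd}{\dd z}\,(z+1)^{\alpha+\beta+\gamma+1}Q_\gamma^{(\alpha,\beta)}(z)
=-(\alpha+\beta+\gamma+1)\,(z+1)^{\alpha+\beta+\gamma+2}\,Q_\gamma^{(\alpha+1,\beta)}(z),
\end{equation*}
with a minus sign; moreover \eqref{relmd7-1} fails already at $n=0$, the exponent on the right needing to be $\beta+\gamma+n+1$ (an off-by-one inherited from \eqref{relmd7}). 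So ``verify $n=1$ and iterate'' is the right plan, but the verification will not reproduce the stated identities: your proof as outlined cannot close without either correcting the signs in the corollary or exhibiting second-kind representations with argument $(z-1)/(z+1)$, which the paper does not provide.
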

\begin{proof}
The proof is analogous to the proof of 
Corollary \ref{cor:3.16}.
We leave this to the reader.
\end{proof}


\def\cprime{$'$} \def\dbar{\leavevmode\hbox to 0pt{\hskip.2ex \accent"16\hss}d}

\end{document}